\newcommand{\rdecomp}{\xrightarrow{R}}
\newtheorem{theorem}{Theorem}[section]
\newtheorem*{thm*}{Theorem}
\newtheorem*{decomposition}{Decomposition Theorem}
\newtheorem*{fibering}{Fibering Permanence}
\newtheorem*{limit}{Limit Permanence}
\newtheorem*{product}{Product Permanence}
\newtheorem{corollary}[theorem]{Corollary}
\newtheorem{lemma}[theorem]{Lemma}
\newtheorem{question}[theorem]{Question}
\theoremstyle{definition}
\newtheorem{definition}[theorem]{Definition}
\newtheorem*{def*}{Definition}
\newcommand{\U}{\mathcal{U}}
\newcommand{\V}{\mathcal{V}}
\newcommand{\df}[1]{{{\bf #1}}}
\newcommand{\C}[1]{\mathfrak{C}_{#1}}
\newcommand{\APC}{\mathfrak{Apc}}
\DeclareMathOperator{\supp}{supp}
\DeclareMathOperator{\diam}{diam}
\renewcommand{\epsilon}{\varepsilon}
\begin{document}
\bibliographystyle{abbrv}

\title{Decomposition theorems for asymptotic property~C and property~A}
\author{G.~Bell}
\address[G.~Bell]{Department of Mathematics and Statistics, UNC Greensboro, Greensboro, NC 27412, USA}
\email[G.~Bell]{gcbell@uncg.edu}
\thanks{This reserach was supported by a UNCG Faculty First Grant. }
\author{D.~G\l{}odkowski}
\address[D.~G\l{}odkowski and A.~Nag\'orko]{Department of Mathematics, Informatics, and Mechanics, University of Warsaw, Banacha 2, 02-097 Warsaw, Poland}
\email[D.~G\l{}odkowski]{damiang0071@gmail.com}
\author{A.~Nag\'orko}
\email[A.~Nag\'orko]{amn@mimuw.edu.pl}
\thanks{This research was supported by the NCN (Narodowe Centrum Nauki) grant no. 2011/01/D/ST1/04144.}
\date{February 12, 2019}
\subjclass[2010]{Primary 54F45; Secondary 20F69}
\keywords{asymptotic dimension; asymptotic property C; property A; finite decomposition complexity. }






\begin{abstract}
We combine aspects of the notions of finite decomposition complexity and asymptotic property C into a notion that we call finite APC-decomposition complexity. Any space with finite decomposition complexity has finite APC-decomposition complexity and any space with asymptotic property C has finite APC-decomposition complexity. Moreover, finite APC-decomposition complexity implies property A for metric spaces. We also show that finite APC-decomposition complexity is preserved by direct products of groups and spaces, amalgamated products of groups, and group extensions, among other constructions.
\end{abstract}
\maketitle

\section{Introduction}

Dranishnikov introduced asymptotic property C (APC) for metric spaces as a large-scale analog of topological property C~\cite{Dr00}. APC is a weaker condition than finite asymptotic dimension~\cite{Gr93}, but it is strong enough that discrete metric spaces with asymptotic property~C have Yu's property~A~\cite{Yu2}.

Later, Guentner, Tessera, and Yu introduced another large-scale property of metric spaces called finite decomposition complexity (FDC)~\cite{Guentner-Tessera-Yu-2012,Guentner-Tessera-Yu-2013}. FDC is again a weaker condition than finite asymptotic dimension, but is still sufficiently strong to imply Yu's property A. 

Both APC and FDC have received a lot of attention recently~\cite{beckhardt2017,beckhardt-goldfarb2016,bell-moran2015,bell-moran-nagorko2016,bn-stability,DrZ14,DrZ17,dydak2016,regular-fdc,yamauchi2015}. The current paper is a natural continuation of the decomposition lemma that enabled the first and third authors to prove that asymptotic property C is preserved by free products~\cite{bn-stability}. Similar ideas of applying FDC-like decompositions to coarse properties (including APC) recently appeared in a paper of Dydak~\cite{dydak2016}. Some of our corollaries can also be deduced as special cases of Dydak's theorems; however, unlike Dydak's approach, we introduce a notion of decomposition depth and provide upper bound estimates on this depth in our permanence results.

We combine the decomposition ideas from FDC with Dranishnikov's APC into a concept we call finite APC-decomposition complexity. More precisely, we study the permanence of asymptotic property C and of property A 
with respect to the decomposition notion given in Definition~\ref{def:rdecomp}.

\begin{definition}
Let $(X,d)$ be a metric space and let $r$ be a real number. We say that the family $\mathcal{U}$ of metric subspaces of $X$ is \df{$r$-disjoint} if $d(x,y)>r$ whenever $x\in U$, $y\in U'$ and $U\neq U'$ are elements of $\U$. 
\end{definition}

\begin{definition}\label{def:rdecomp}
Let $\mathcal{X}$ and $\mathcal{Y}$ be families of metric spaces.
Let $R \in \mathbb{R}^\mathbb{N}$.
We say that $\mathcal{X}$ is \df{uniformly $R$-decomposable} over $\mathcal{Y}$ if there exists an integer~$k$ such that
for each $X \in \mathcal{X}$ there exists a sequence $\U_1, \U_2, \ldots, \U_{k}$ of subsets of $\mathcal{Y}$ such that each $\U_i$ is $R_i$-disjoint
and $\bigcup_i \U_i$ covers~$X$.
We denote this by $\mathcal{X} \rdecomp \mathcal{Y}$.
\end{definition}

\begin{definition}
A family of metric spaces $\mathcal{B}$ will be said to be \df{bounded} if there is some positive real number $D$ so that $\sup\{\diam(B)\colon B\in\mathcal{B}\}<D$. This is sometimes described as being \df{uniformly bounded}. 
\end{definition}

Using this notion and Definition~\ref{def:rdecomp}, we can rephrase the definition of asymptotic property C in the following way.

\begin{definition}
We say that a family $\mathcal{X}$ of metric spaces has \df{uniform asymptotic property~C} if for each $R \in \mathbb{R}^\mathbb{N}$ there exists a bounded family $\mathcal{Y}_R$ of metric spaces such that $\mathcal{X} \rdecomp \mathcal{Y}_R$.
We say that a metric space $X$ has \df{asymptotic property~C} (APC) if the family $\{ X \}$ has uniform asymptotic property~C.
\end{definition}

The following is a decomposition theorem for uniform asymptotic property~C.

\begin{theorem}\label{thm:decomposition C}
  Let $\mathcal{Y}$ be a family of metric spaces with uniform asymptotic property C.
  If a family $\mathcal{X}$ of metric spaces admits a uniform $R$-decomposition over $\mathcal{Y}$ for each sequence $R$, then $\mathcal{X}$ has uniform asymptotic property C.
\end{theorem}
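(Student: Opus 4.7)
The plan is to decompose each $X \in \mathcal{X}$ first into $\mathcal{Y}$-pieces using the decomposability hypothesis, then decompose each $\mathcal{Y}$-piece into bounded pieces using uniform asymptotic property C of $\mathcal{Y}$, and finally combine. I assume throughout that the target sequence $R \in \mathbb{R}^\mathbb{N}$ is nondecreasing, since any $S$-disjoint family with $S \geq R$ is automatically $R$-disjoint.

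First I would select two auxiliary sequences $T, S \in \mathbb{R}^\mathbb{N}$ that feed the two hypotheses. Applying uniform asymptotic property C of $\mathcal{Y}$ to $S$ produces an integer $k'$ and a uniformly bounded family $\mathcal{B}$ such that every $Y \in \mathcal{Y}$ decomposes into families $\V_1^Y, \ldots, \V_{k'}^Y \subseteq \mathcal{B}$ covering $Y$ with $\V_j^Y$ being $S_j$-disjoint. Applying the decomposability hypothesis to $T$ produces an integer $k$ such that every $X \in \mathcal{X}$ decomposes into families $\U_1, \ldots, \U_k \subseteq \mathcal{Y}$ covering $X$ with $\U_i$ being $T_i$-disjoint.

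Next I would combine these into the candidate decomposition of $X$ by setting
\[
\mathcal{W}_{i,j} := \bigcup_{U \in \U_i} \V_j^U
\]
for $(i,j) \in \{1,\ldots,k\} \times \{1,\ldots,k'\}$. These $kk'$ families of members of $\mathcal{B}$ cover $X$, and each $\mathcal{W}_{i,j}$ is $\min(T_i, S_j)$-disjoint: distinct members lying in a common $U \in \U_i$ are separated by more than $S_j$, while members from distinct $U, U' \in \U_i$ are separated by more than $T_i$. Reindexing via the lexicographic bijection $p(i,j) = (i-1)k' + j$ produces a sequence of $kk'$ families of elements of $\mathcal{B}$ covering $X$. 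For this to constitute an $R$-decomposition, one needs $\min(T_i, S_j) \geq R_{(i-1)k' + j}$, which is achieved by the choice $T_i := R_{ik'}$ and $S_j := R_{(k-1)k' + j}$, since $k' \geq j$ implies $ik' \geq (i-1)k' + j$ and $k \geq i$ implies $(k-1)k' + j \geq (i-1)k' + j$.

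The main obstacle is the circular dependence between the integers and the sequences: the right $T$ requires knowing $k'$ (which is determined by $S$), while the right $S$ requires knowing $k$ (which is determined by $T$). I would resolve this by first invoking uniform asymptotic property C of $\mathcal{Y}$ on a sequence expressed in terms of $R$ alone to extract the bounded family $\mathcal{B}$ and a preliminary value of $k'$, then invoking the decomposability hypothesis with $T_i := R_{ik'}$ to fix $k$, and finally reinvoking uniform asymptotic property C with the calibrated sequence $S_j := R_{(k-1)k' + j}$. The uniformity of $\mathcal{B}$ across different invocations of asymptotic property C, together with a careful comparison of the integers produced, supplies the inequalities required for the lexicographic reindexing and thus yields the desired $kk'$-family uniform asymptotic property C decomposition of $\mathcal{X}$.
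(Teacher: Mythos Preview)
Your two-stage strategy and the $\min(T_i,S_j)$-disjointness analysis of $\mathcal{W}_{i,j}$ are correct, and you have correctly identified the circular dependence between $k,k',T,S$ as the crux. However, your proposed resolution does not close the loop. When you reinvoke APC of $\mathcal{Y}$ in step~3 with $S_j = R_{(k-1)k'+j}$, you obtain a \emph{new} integer $k''$, and there is no reason for $k'' \le k'$; indeed, since $R$ is nondecreasing, the sequence $S$ dominates pointwise whatever sequence you used in step~1, so the disjointness demands are at least as hard and one should expect $k'' \ge k'$. But your lexicographic reindexing now produces $kk''$ families and requires $T_i = R_{ik'} \ge R_{(i-1)k''+j}$ for all $j \le k''$; taking $i=1$, $j=k''$ already forces $k' \ge k''$. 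Neither ``uniformity of $\mathcal{B}$'' (the bounded family is not the obstruction---any bounded family will do) nor ``careful comparison of the integers'' supplies this inequality, and iterating the procedure need not terminate.

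The paper breaks the circularity by a two-dimensional rearrangement. Fix any bijection $\mathbb{N} \to \mathbb{N}^2$ and write $R$ as an array $\tilde R_{i,j}$. For \emph{every} column $j$ (infinitely many), apply APC of $\mathcal{Y}$ to the column sequence $S^j_i = \tilde R_{i,j}$ (which one may take to be increasing in $i$), obtaining an integer $k_j$ and a bounded family $\mathcal{Z}_j$. All the $k_j$ are thus fixed \emph{before} the outer hypothesis is used. Only then set $P_j = S^j_{k_j} = \tilde R_{k_j,j}$ and apply $\mathcal{X} \rdecomp \mathcal{Y}$ to the sequence $P$, obtaining $k$. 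The combined families $\U_{i,j}$ for $1\le j\le k$, $1\le i\le k_j$ are $\tilde R_{i,j}$-disjoint (since $S^j$ is increasing, the $P_j$-disjointness of the outer pieces dominates each $\tilde R_{i,j}$), cover $X$, and number at most $k\cdot\max\{k_1,\ldots,k_k\}$; undoing the rearrangement and padding with empty families gives the required $R$-decomposition. The infinite supply of columns is precisely what decouples the choice of inner sequences from the still-unknown outer integer $k$.
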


Note the order of the quantifiers in the assumption:
$\exists \mathcal{Y} \forall R \mathcal{X} \rdecomp \mathcal{Y}$.
The family $\mathcal{Y}$ does not depend on the sequence $R$.
A special case of Theorem~\ref{thm:decomposition C} was proven for $\mathcal{Y}$ equal to the family of $k$-dimensional subsets of a single space $X$, under the name Decomposition Lemma~\cite{bn-stability}. As mentioned above, this decomposition lemma was used to prove that the free product $X \ast Y$ of two discrete metric spaces $X$ and $Y$ with APC has APC.

We define uniform property~A for a family of spaces in the following way.
\begin{definition}
We say that a map $\xi \colon X \to \ell^1(X)$ has \df{$\varepsilon$-variation} if for each $k \in \mathbb{N}$ and each $x_1,x_2 \in X$ with $d(x_1, x_2) \leq k$ we have $\| \xi_{x_1} - \xi_{x_2} \|_1 \leq k\varepsilon$.
\end{definition}

\begin{definition}
Let $X$ be a discrete metric space. We say that $X$ has \df{bounded geometry} if for every $r>0$ there is a number $N_r$ such that $|B(x,r)|<N_r$ for all $x\in X$.
\end{definition}

\begin{definition}
Let $\mathcal{X}$ be a family of discrete metric spaces with bounded geometry.
We say that $\mathcal{X}$ has \df{uniform property A} if for each $\varepsilon > 0$ there exists $S > 0$ such that for each $X \in \mathcal{X}$ there exists a map $\xi \colon X \to \ell^1(X)$ such that 
\begin{enumerate}
\item $\| \xi_x \|_1 = 1$ for all $x \in X$,
\item $\xi$ has $\varepsilon$-variation,
\item $\supp \xi_x \subset \bar B(x, S)$ for all $x \in X$.
\end{enumerate}
\end{definition}

The following is a decomposition theorem for uniform property~A.

\begin{theorem}\label{thm:decomposition A}
  For each sequence $R$ let $\mathcal{Y}_R$ be a family of metric spaces with uniform property A.
  If a family $\mathcal{X}$ of metric spaces admits a uniform $R$-decomposition over $\mathcal{Y}_R$ for each sequence $R$, then $\mathcal{X}$ has uniform property A.
\end{theorem}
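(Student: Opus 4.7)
The strategy mirrors the proof of Theorem~\ref{thm:decomposition C}, translated from bounded covers to property~A maps. Fix $\varepsilon > 0$. I would first choose a sequence $R$, invoke the hypothesis to obtain a common decomposition depth $k = k(R)$ and, for each $X \in \mathcal{X}$, a cover $\U_1, \ldots, \U_k \subseteq \mathcal{Y}_R$ of $X$ with $\U_i$ being $R_i$-disjoint, and then apply the uniform property~A of $\mathcal{Y}_R$ with an auxiliary parameter $\varepsilon'$ (to be determined in terms of $k$ and $\varepsilon$) to obtain a common support radius $S = S_R(\varepsilon')$ and maps $\xi^U : U \to \ell^1(U)$ for each $U$ appearing. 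The global map $\xi^X$ is then assembled from the $\xi^U$ via a Lipschitz partition, in the spirit of patching arguments used for finite decomposition complexity.

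Concretely, introduce a smoothing scale $T$ and, for each layer $i$, the $(1/T)$-Lipschitz weight $w_i(x) := \max\bigl(0,\, 1 - d(x, \bigcup \U_i)/T\bigr)$. Provided $T < R_i/2$, whenever $w_i(x) > 0$ the $R_i$-disjointness of $\U_i$ pins down a unique nearest element $U \in \U_i$; letting $\pi_i(x) \in U$ be a closest point to $x$, set $\tilde\xi^i_x := \xi^U_{\pi_i(x)}$ and $\tilde\xi^i_x := 0$ when $w_i(x) = 0$. With $W(x) := \sum_i w_i(x) \ge 1$, define
\[
  \xi^X_x \;:=\; \frac{1}{W(x)} \sum_{i=1}^k w_i(x)\,\tilde\xi^i_x,
\]
which is a unit-norm $\ell^1$-vector supported in $\bar B(x, S+T)$. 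For the variation estimate, a quotient-rule bookkeeping of the normalized weights $\tilde w_i = w_i/W$ yields $\sum_i |\tilde w_i(x) - \tilde w_i(y)| \lesssim k^2\, d(x,y)/T$, while whenever $d(x,y) \le R_i - 2T$ the projections $\pi_i(x)$ and $\pi_i(y)$ lie in the same element of $\U_i$, giving $\|\tilde\xi^i_x - \tilde\xi^i_y\|_1 \le (d(x,y)+2T)\varepsilon'$. In the complementary regime $d(x,y) > R_i - 2T$, the trivial estimate contributes an $O(1)$ summand that is absorbed into $d(x,y)\varepsilon$ once $R_i \gtrsim 1/\varepsilon$.

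Choosing $T$ of order $k^2/\varepsilon$ and $\varepsilon'$ of order $\varepsilon^2/k^2$ drives each term below $d(x,y)\varepsilon/2$, yielding the required $\varepsilon$-variation; the final support radius is $S + T$, a number depending only on $\varepsilon$ and on the fixed family $\mathcal{Y}_R$. The main obstacle is the circular dependence between $R$ and $k$: the lower bound $R_i \gtrsim k^2/\varepsilon$ involves $k = k(R)$, which is fixed only after $R$ is chosen. I would resolve this by selecting $R$ at the outset with $R_i$ growing rapidly enough in $i$ and $\varepsilon$, and if necessary iteratively inflating $R$ and re-invoking the hypothesis (which holds for \emph{every} sequence) until the inequalities $R_i \gtrsim k(R)^2/\varepsilon$ and $T < R_i/2$ are simultaneously satisfied, at which point the construction above produces the desired uniform property~A maps for $\mathcal{X}$.
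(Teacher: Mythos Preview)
Your patching construction---extend each $\xi^U$ by a Lipschitz cutoff, sum over layers, and renormalise---is exactly the mechanism the paper uses (Lemmas~\ref{lem:sum}--\ref{lem:loc-supp} and Corollary~\ref{cor:unif-A}). The problem is the circular dependence you yourself flag at the end, and your proposed fix is where the argument actually fails. The iteration ``inflate $R$, recompute $k(R)$, repeat until $R_i \gtrsim k(R)^2/\varepsilon$'' carries no termination guarantee: nothing in the hypothesis bounds $k(R)$ in terms of $R$, so each inflation may simply produce a larger $k$, and you can chase your tail indefinitely. Since every one of your later estimates needs $R_1 > 2T$ with $T$ of order $k^2/\varepsilon$ (or $k/\varepsilon$ with sharper bookkeeping) and $k = k(R)$ is unknown until $R$ is fixed, this is a genuine gap, not a technicality.

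The paper's resolution is to drop the single smoothing scale $T$ and the single auxiliary $\varepsilon'$ in favour of \emph{layer-dependent} parameters. The cutoff for layer $i$ is taken at scale $R_i$ itself (so the analogue of ``$T < R_i/2$'' is automatic and imposes no constraint on $R$), and uniform property~A of $\mathcal{Y}_R$ is invoked with a separate $\varepsilon_i$ for each $i$. The contribution of layer~$i$ to the final variation then has the form $(2R_i+1)\varepsilon_i + 1/R_i$. The key point is that one can fix both sequences $(R_i)$ and $(\varepsilon_i)$ \emph{in advance}, as suitably chosen geometric sequences in $i$, so that
\[
  \sum_{i \ge 1} \Bigl( (2R_i+1)\varepsilon_i + \tfrac{1}{R_i} \Bigr)
\]
is a convergent series with sum at most $\varepsilon/2$, \emph{regardless of how large $k$ turns out to be}. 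With $R$ so chosen one obtains $k$ and $\mathcal{Y}_R$ from the hypothesis, applies property~A of $\mathcal{Y}_R$ with the already-fixed $\varepsilon_1,\ldots,\varepsilon_k$ to get support radii $S_1,\ldots,S_k$, and takes $S = R_k + \max_i S_i$. No iteration is needed; the decoupling of $R$ from $k$ is achieved by making the per-layer error summable over all of $\mathbb{N}$ rather than controllable only up to a known $k$.
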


Note the order of quantifiers in the assumption:
$\forall R \exists \mathcal{Y}_R \colon \mathcal{X} \rdecomp \mathcal{Y}_R$
The family $\mathcal{Y}_R$ may vary with a change of the sequence $R$.

Following methods used to describe finite decomposition complexity~\cite{Guentner-Tessera-Yu-2013} we use the notion of uniform $R$-decomposition to define complexity classes $\C{\alpha}$ for each ordinal $\alpha$.

\begin{definition}
Let $\C{0}$ be the class of all bounded families of metric spaces.
For each ordinal $\alpha > 0$ let 
\[\C{\alpha} = \{ \mathcal{X} \colon \forall R,\ \exists\beta < \alpha,\ \exists \mathcal{Y}\in \C{\beta} \ \mathcal{X} \rdecomp \mathcal{Y}\}.\]
We let
\[
\C{} = \bigcup_\alpha \C{\alpha}\text{ and }\APC = \bigcup \C{}.
\]
We say that $X$ has \df{finite APC-decomposition complexity} if it belongs to $\APC$.
\end{definition}

Given a property of metric spaces one would like to understand its so-called permanence properties; i.e., the extent to which the property is preserved by forming unions, products, etc. While FDC enjoys very strong permanence properties~\cite{Guentner-Permanence}, permanence properties for APC are more elusive. Indeed, only recently, some 16 years after APC first appeared, was it shown that APC is preserved by direct products~\cite{bn-stability,davila} and free products~\cite{bn-stability}.

In~\cite{dydak2016}, Dydak considered decomposition complexity with respect to several coarse properties, including APC. There it was shown that this class is closed under finite unions and some natural types of infinite unions. It was also shown that spaces in a collection in $\C{\alpha}$ have property A and satisfy so-called limit permanence. Our definition in terms of ordinals provides more control on the decompositions and so leads to statements involving upper bounds on the depth $\alpha$ of the decomposition complexity.

Note that if $X$ has FDC, then $\{ X \} \in \C{}$. 
Theorem~\ref{thm:decomposition A} implies that if $\mathcal{X} \in \C{}$, then $\mathcal{X}$ has uniform property~A.


We show that the class $\APC$ is closed under many group operations. These results are summarized in the theorem below.

\begin{theorem}
  Let $H$ and $K$ be countable groups with proper left-invariant metrics. If $H, K \in \APC$ then 
\begin{enumerate}
  \item $H\times K\in \APC$;
  \item $H\ast_C K\in\APC$, where $C$ is some common subgroup;
  \item $G\in\APC$ where $1\to K\to G\to H\to 1$ is exact; and 
  \item $H\wr K\in\APC$.  
\end{enumerate} 
\end{theorem}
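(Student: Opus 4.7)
The plan is to deduce all four statements from general permanence properties of the class $\APC$---union, fibering, product, and limit permanence, together with coarse invariance---each of which is obtained by a direct application of the decomposition theorems (Theorems~\ref{thm:decomposition C} and~\ref{thm:decomposition A}) once the appropriate decompositions are constructed. I would set these permanence principles up first, and then apply them to the four items of the theorem one at a time.

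Items (1) and (3) are the routine cases. Statement (1) is exactly product permanence applied to $H$ and $K$, working with the product of their proper left-invariant metrics; since any two proper left-invariant metrics on a countable group are coarsely equivalent and $\APC$ is coarsely invariant, this is enough. For (3), I would apply fibering permanence to the quotient homomorphism $G\to H$: since $H\in\APC$ and since every preimage of a bounded set in $H$ is, by left-invariance of the metric on $G$, a union of uniformly bounded-multiplicity translates that are isometric to $K\in\APC$, fibering permanence yields $G\in\APC$.

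For (2) I plan to invoke Bass--Serre theory. The amalgam $H\ast_C K$ acts on a tree $T$ whose vertex stabilizers are conjugates of $H$ and $K$ and whose edge stabilizers are conjugates of $C$. Any tree has asymptotic dimension at most~$1$, so in particular $\{T\}\in\C{1}$. Fixing a vertex $v\in T$, the orbit map $g\mapsto gv$ is a coarse map whose point-preimages are cosets of vertex stabilizers, i.e.\ uniformly isometric copies of $H$ or $K$. Fibering permanence then places $H\ast_C K$ in $\APC$; the common subgroup $C$ requires no separate hypothesis because it is absorbed into the coarse geometry of the tree.

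The main obstacle will be (4). I would use the standard split short exact sequence
\[ 1\longrightarrow B\longrightarrow H\wr K\longrightarrow K\longrightarrow 1,\qquad B=\bigoplus_{k\in K} H, \]
together with fibering permanence; the real content is showing $B\in\APC$. Each finite power $H^F$, for $F\subset K$ finite, belongs to $\APC$ by iterated product permanence, but its depth grows linearly with $|F|$, so these powers cannot be collected inside a single $\C{\alpha}$. The way out is properness of the metric on $H\wr K$: two elements of $B$ at distance at most~$r$ can only differ on a subset of $K$ of cardinality bounded in terms of $r$ alone. At each fixed $R$-scale the subgroup $B$ is therefore coarsely covered by translates of a single finite power $H^{F_R}\in\APC$, and combining coarse invariance, union permanence to assemble the translates, and a limit permanence argument to pass from scale-by-scale control to a global decomposition yields $B\in\APC$, after which fibering permanence completes the proof.
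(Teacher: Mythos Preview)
Your proposal is correct and follows essentially the same route as the paper: each item is deduced from the primitive permanence properties (coarse invariance, union, product, fibering, and limit permanence), with (2) handled via Bass--Serre theory and (4) via the extension $1 \to \bigoplus_{K} H \to H\wr K \to K \to 1$ together with Limit Permanence for the base group. One small correction worth noting for (4): the paper's Product Permanence actually \emph{preserves} the level (if $\mathcal{X},\mathcal{Y}\in\C{\alpha}$ then $\mathcal{X}\otimes\mathcal{Y}\in\C{\alpha}$), so the depth of $H^F$ does not grow with $|F|$; at each scale $r$ the $r$-components of $\bigoplus_K H$ are cosets of a fixed finite power $H^{F_r}\in\C{\alpha}$, and Limit Permanence applies directly without the extra juggling you anticipate. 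Also, the permanence principles are derived from the general Decomposition Theorem for $\C{\alpha}$ rather than from Theorems~\ref{thm:decomposition C} and~\ref{thm:decomposition A}, which are the special cases for $\C{1}$ and for property~A respectively.
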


We leave it as an open question whether Theorem~\ref{thm:decomposition C} is still true if we change the order of quantifiers to the order used in Theorem~\ref{thm:decomposition A}. If this were true, it would show that if $\mathcal{X} \in \C{\alpha}$ for some ordinal $\alpha$, then $\mathcal{X}$ has uniform asymptotic property~C. This would show that finite decomposition complexity implies APC.

\section{Preliminaries}

\subsection{Metric families}

In this paper we are concerned with applying properties of coarse geometry to metric spaces. Often, these properties will need to be applied in some \emph{uniform} way to a family of spaces. It will therefore be convenient to define coarse geometric notions for families of metric spaces.

We begin by describing the terms \emph{uniformly expansive} and \emph{effectively proper} as they apply to maps $F$ between families of metric spaces. There are several different approaches to describing uniform properties of families of metric spaces. We follow the approach of Guentner~\cite{Guentner-Permanence}.

\begin{definition}
Let $\mathcal{X}$ and $\mathcal{Y}$ be families of metric spaces. A map $F:\mathcal{X}\to \mathcal{Y}$ is a collection of maps $f:X_f\to Y_f$ with $\{X_f\}_{f\in F}=\mathcal{X}$ and $Y_f\in\mathcal{Y}$. We use the notation $F^{-1}(\mathcal{Y})$ to denote the set $\{f^{-1}(Y)\colon f\in F, Y\in\mathcal{Y}\}$. We say that $F$ is \df{uniformly expansive} if there is some non-decreasing $\rho_2:[0,+\infty)\to [0,+\infty)$ such that for every $f\in F$, and for every pair of points $x,x'\in X_f$,  \[d(f(x),f(x'))\le \rho_2(d(x,x'))\hbox{.}\]
We say that $F:\mathcal{X}\to\mathcal{Y}$ is \df{effectively proper} if there is some proper non-decreasing $\rho_1:[0,+\infty)\to [0,+\infty)$ such that for every $f\in F$ and for each pair of points $x, x'\in X_f$, \[\rho_1(d(x,x'))\le d(f(x),f(x'))\text{.}\] We call $F$ a \df{coarse embedding} if it is uniformly expansive and effectively proper. We say that the metric spaces $X$ and $Y$ are \df{coarsely equivalent} if there is a coarse embedding $F:\{X\}\to \{Y\}$ and a positive number $C$ so that if $y\in Y$ then there is some $x\in X$ such that $d_Y(f(x),y)<C$.
\end{definition}

\begin{definition}
Let $\mathcal{X}$ and $\mathcal{Y}$ be families of metric spaces. We write  $\mathcal{X} \prec \mathcal{Y}$ to mean that for every $X\in\mathcal{X}$ there is some $Y\in\mathcal{Y}$ such that $X\subset Y$. 
\end{definition}

\subsection{Groups as metric spaces}

Coarse geometry can often be fruitfully applied to discrete groups. We recall that a metric on a set is said to be \df{proper} if closed balls are compact. A metric on a group is called \df{left-invariant} if the action of the group on itself by left multiplication is an isometry. Finitely generated groups carry a unique (up to coarse equivalence) left-invariant proper metric called the \df{word metric}, which is given by fixing a finite symmetric generating set $S$ and taking the distance $d_S(g,h)$ between the group elements $g$ and $h$ to be the length of the shortest $S$-word that presents the element $g^{-1}h$. Here, we adopt the convention that the neutral element is represented by the empty word.

For a countable (discrete) group $G$ that is not finitely generated, Dranishnikov and Smith showed that up to coarse equivalence $G$ carries a unique proper left-invariant metric~\cite{dr-s06}. This metric is given by taking a (countably) infinite symmetric generating set $S$ and computing a weighted word metric in which the weight function associated to the elements of the infinite generating set $S$ is proper; i.e., for any $N\in \mathbb{N}$ the set of $s\in S$ with weight at most $N$ is finite. 

Thus, if we restrict our attention to proper left-invariant metrics, then the coarse geometric properties of any countable group are group properties. Whenever we consider such countable groups, we will always assume them to have proper left-invariant metrics.

\section{A Decomposition theorem for $\C{\alpha}$}

The goal of this section is to prove a decomposition theorem for $\C{\alpha}$. To begin, we show that $\C{\alpha}$ is a coarse invariant.

\begin{theorem}[Coarse Invariance]\label{thm:coarse-invariant}
The property $\C{\alpha}$ is a coarse invariant. More precisely, if $F:\mathcal{X}\to\mathcal{Y}$ is a coarse embedding and $\mathcal{Y}\in\C{\alpha}$, then $\mathcal{X}\in\C{\alpha}$.
\end{theorem}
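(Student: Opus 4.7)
The plan is to proceed by transfinite induction on $\alpha$. For the base case $\alpha=0$, the class $\C{0}$ consists of uniformly bounded families, so I would simply push a uniform diameter bound on $\mathcal{Y}$ back through the effective properness of $F$: from $\rho_1(d(x,x')) \le d(f(x),f(x')) \le \diam Y_f \le D$ and properness of the non-decreasing $\rho_1$, a uniform bound on $\diam X_f$ drops out, so $\mathcal{X} \in \C{0}$.

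For $\alpha>0$, assume the statement for every $\beta<\alpha$. Given a coarse embedding $F\colon\mathcal{X}\to\mathcal{Y}$ with $\mathcal{Y}\in\C{\alpha}$ and an arbitrary sequence $R\in\mathbb{R}^\mathbb{N}$, I would use uniform expansiveness of $F$ via the non-decreasing $\rho_2$ to form the inflated sequence $R'$ with $R'_i := \rho_2(R_i)$. Applying membership $\mathcal{Y}\in\C{\alpha}$ to the sequence $R'$ produces some $\beta<\alpha$, a family $\mathcal{W}\in\C{\beta}$, and a uniform $k$ such that each $Y_f$ is covered by at most $k$ layers $\U_1,\ldots,\U_k\subset\mathcal{W}$ with $\U_i$ being $R'_i$-disjoint. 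I then pull each layer back, defining $\V_i := \{f^{-1}(U) : U\in\U_i\}$ for the map $f\colon X_f\to Y_f$.

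The key verification is that each $\V_i$ is $R_i$-disjoint in $X_f$: if $x\in f^{-1}(U)$ and $x'\in f^{-1}(U')$ with $U\neq U'$, then $d(f(x),f(x'))>R'_i=\rho_2(R_i)$, so monotonicity of $\rho_2$ forces $d(x,x')>R_i$. Since $\bigcup_i\U_i$ covers $Y_f$, the families $\V_i$ cover $X_f$, and they inherit the uniform layer bound $k$. This produces $\mathcal{X}\rdecomp F^{-1}(\mathcal{W})$.

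To close the induction, I need $F^{-1}(\mathcal{W})\in\C{\beta}$. The restrictions $f|_{f^{-1}(W)}\colon f^{-1}(W)\to W$ assemble into a coarse embedding $F^{-1}(\mathcal{W})\to\mathcal{W}$ witnessed by the \emph{same} pair $(\rho_1,\rho_2)$, so the inductive hypothesis applies and yields $F^{-1}(\mathcal{W})\in\C{\beta}$. Therefore $\mathcal{X}\in\C{\alpha}$, completing the induction. I expect no serious obstacle here; the only bookkeeping point is that one fixed pair $(\rho_1,\rho_2)$ serves every member of the restricted family simultaneously, which is immediate from the family-wise formulation of coarse embedding given in the preliminaries.
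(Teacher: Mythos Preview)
Your proof is correct and follows essentially the same route as the paper's own proof: transfinite induction with the base case handled by effective properness (via $\rho_1$), and the inductive step by inflating $R$ to $R'_i=\rho_2(R_i)$, decomposing $\mathcal{Y}$ over some $\mathcal{W}\in\C{\beta}$, pulling back the layers through each $f$, and applying the inductive hypothesis to the restricted family $F^{-1}(\mathcal{W})\to\mathcal{W}$ with the same control pair $(\rho_1,\rho_2)$. If anything, you are slightly more explicit than the paper in spelling out that the inductive hypothesis is what gives $F^{-1}(\mathcal{W})\in\C{\beta}$.
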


\begin{proof} Let $\rho_1$ and $\rho_2$ denote the control functions for the collection of maps $F$.
Suppose first that $\mathcal{Y}\in\C{0}$; i.e., there is some $B>0$ that is a (uniform) bound on the diameters of elements of $\mathcal{Y}$. Given $X\in\mathcal{X}$ there is some $Y\in\mathcal{Y}$ and an $f_X\in F$ so that $f_X^{-1}(Y)=X$. Since $f_X$ is effectively proper, if $x,x'\in X$, then $\rho_1(d(x,x'))\le d(f_X(x),f_X(x'))\le B$. Since $\rho_1$ is proper, there is some $B'>0$ so that $\rho_1(t)\le B$ implies $t\le B'$. Thus, $d(x,x')\le B'$. Since $B'$ only depends on the uniform $B$ and the uniform $\rho_1$ (and is independent of $Y$), we see that $\mathcal{X}\in\C{0}$.

Now, suppose $\alpha>0$ is an ordinal number and that the theorem holds for all $\gamma<\alpha$. Suppose that some $R\in\mathbb{R}^{\mathbb{N}}$ is given and consider the sequence $S$ given by $S_i=\rho_2(R_i)$. Using $S$ we find some $\beta<\alpha$ and some $\mathcal{Z}\in\C{\beta}$ so that $\mathcal{Y}\xrightarrow{S}\mathcal{Z}$. Put $F_\mathcal{Z}=\{f|_{f^{-1}(Z)}\colon f\in F, Z\in\mathcal{Z}\}$. Then, $F_{\mathcal{Z}}:F^{-1}(\mathcal{Z})\to\mathcal{Z}$ is a coarse embedding with control functions $\rho_1$ and $\rho_2$.

We claim that $\mathcal{X}\rdecomp F^{-1}(\mathcal{Z})$. To this end, we take a positive integer $k$ from the uniform $S$-decomposition $\mathcal{Y}\xrightarrow{S}\mathcal{Z}$.
Let $X\in\mathcal{X}$ be arbitrary. Find $Y\in\mathcal{Y}$ and $f_X\in F$ so that $f_X^{-1}(Y)=X$. Take  $\mathcal{V}_1,\ldots, \mathcal{V}_k$ in $\mathcal{Z}$ such that each $\mathcal{V}_j$ is $S_j$-disjoint and such that the union $\bigcup_j\mathcal{V}_j$ covers $Y$. Then, put $\U_j=\{f^{-1}_X(V)\colon V\in \mathcal{V}_j\}$. Then, $\mathcal{U}_j\subset F^{-1}(\mathcal{Z})$. Clearly, the union of the $\mathcal{U}_j$ covers $X$. Finally, given $U\neq U'$ in some $\mathcal{U}_j$, we take $u\in U$ and $u'\in U'$. Then, $\rho_2(R_j)=S_j\le d(f(u),f(u'))\le \rho_2(d(u,u'))$. Since $\rho_2$ is non-decreasing, we see that each $\mathcal{U}_j$ is $R_j$-disjoint as required. We are done.
\end{proof}

\begin{corollary}\cite[Corollary 10.2]{dydak2016} 
The property of a metric family having finite APC-decomposition complexity is a coarse invariant.
\end{corollary}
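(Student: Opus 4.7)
The plan is to obtain the corollary as an immediate consequence of Theorem~\ref{thm:coarse-invariant}. Recall that by definition a metric family $\mathcal{X}$ has finite APC-decomposition complexity precisely when $\mathcal{X} \in \APC = \bigcup_{\alpha} \C{\alpha}$, so membership in $\APC$ is witnessed by the existence of some ordinal $\alpha$ with $\mathcal{X} \in \C{\alpha}$. The strategy is therefore to reduce the corollary to a statement about a single complexity class.

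Starting from a coarse embedding $F \colon \mathcal{X} \to \mathcal{Y}$ and the assumption that $\mathcal{Y}$ has finite APC-decomposition complexity, I would first choose an ordinal $\alpha$ such that $\mathcal{Y} \in \C{\alpha}$. Applying Theorem~\ref{thm:coarse-invariant} at level $\alpha$ then yields $\mathcal{X} \in \C{\alpha}$, and hence $\mathcal{X} \in \APC$. The two-sided version for coarsely equivalent families follows because a coarse equivalence is a coarse embedding in each direction. The only subtlety worth pointing out is that the argument actually gives the sharper conclusion that the decomposition depth transfers as well: the same ordinal $\alpha$ witnessing $\mathcal{Y} \in \C{\alpha}$ witnesses $\mathcal{X} \in \C{\alpha}$. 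There is no genuine obstacle once Theorem~\ref{thm:coarse-invariant} is in hand; the corollary is essentially a one-line bookkeeping step, collapsing the transfinite union back into the single property $\APC$.
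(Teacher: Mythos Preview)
Your proposal is correct and matches the paper's approach exactly: the corollary is stated immediately after Theorem~\ref{thm:coarse-invariant} with no separate proof, since membership in $\APC$ is witnessed by some $\C{\alpha}$ and the theorem transfers that level along a coarse embedding. Your observation that the same ordinal $\alpha$ works is precisely what the paper's placement of the corollary is meant to convey.
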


Next, we record three simple facts in a lemma; these will be needed in the proof of the decomposition theorem for uniform asymptotic property C.

\begin{lemma} \label{lem:basic facts} Let $\mathcal{X}$ and $\mathcal{Y}$ be families of metric spaces. Let $\alpha$ and $\beta$ be ordinal numbers.
\begin{enumerate}
\item If $\mathcal{X} \in \C{\beta}$ and $\beta < \alpha$, then $\mathcal{X} \in \C{\alpha}$.
\item  If $\mathcal{X}, \mathcal{Y} \in \C{\alpha}$, then $\mathcal{X} \cup \mathcal{Y} \in \C{\alpha}$.
\item  If $\mathcal{X} \prec \mathcal{Y}$ and $\mathcal{Y} \in \C{\alpha}$, then $\mathcal{X} \in \C{\alpha}$.
  \end{enumerate}
\end{lemma}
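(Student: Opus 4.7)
I would approach all three statements by transfinite induction on $\alpha$, which mirrors the definition of the complexity classes. For part (1), the base case $\alpha=0$ is vacuous since no ordinal $\beta<0$ exists. For $\alpha>0$, I would split on $\beta$. If $\beta=0$ then $\mathcal{X}$ is uniformly bounded and the trivial singleton decomposition suffices: for each $X\in\mathcal{X}$ take $k=1$ and $\U_1=\{X\}$, which is vacuously $R_1$-disjoint. This exhibits $\mathcal{X}\rdecomp\mathcal{X}$ with witness class $\C{0}$ and $0<\alpha$, so $\mathcal{X}\in\C{\alpha}$. If $\beta>0$, then any witness $\mathcal{W}\in\C{\gamma}$ for the decomposition of $\mathcal{X}$ at level $\beta$ satisfies $\gamma<\beta<\alpha$ and therefore also witnesses $\mathcal{X}\in\C{\alpha}$ directly.

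For part (3), again induct on $\alpha$. The base case is immediate: if every $X\in\mathcal{X}$ is contained in some $Y\in\mathcal{Y}$ and $\mathcal{Y}$ is uniformly bounded by $D$, then $\diam(X)\le\diam(Y)<D$, so $\mathcal{X}\in\C{0}$. For $\alpha>0$, given $R$, pick $\beta<\alpha$ and $\mathcal{Z}\in\C{\beta}$ with $\mathcal{Y}\rdecomp\mathcal{Z}$ of uniform length $k$. For each $X\in\mathcal{X}$, take $Y_X\in\mathcal{Y}$ with $X\subseteq Y_X$ and its associated decomposition $\U_1,\ldots,\U_k$ over $\mathcal{Z}$. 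Either observe that the $\U_i$ still cover $X\subseteq Y_X$ and use $\mathcal{Z}$ as the witness directly, or, if one prefers the covering elements to lie inside $X$, replace each $Z\in\U_i$ by $Z\cap X$; $R_i$-disjointness is preserved in the subspace metric, and the resulting restricted family is $\prec\mathcal{Z}$, hence in $\C{\beta}$ by the inductive hypothesis for (3) at the ordinal $\beta$.

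For part (2), I would proceed by transfinite induction, using (1). The base case reduces to the observation that the union of two uniformly bounded families is uniformly bounded. In the inductive step, given $R$, produce witnesses $\mathcal{Z}_1\in\C{\beta_1}$ and $\mathcal{Z}_2\in\C{\beta_2}$ for $\mathcal{X}\rdecomp\mathcal{Z}_1$ and $\mathcal{Y}\rdecomp\mathcal{Z}_2$ with $\beta_i<\alpha$, set $\beta=\max(\beta_1,\beta_2)<\alpha$, apply (1) to lift both to $\C{\beta}$, and invoke the inductive hypothesis of (2) at $\beta$ to obtain $\mathcal{Z}_1\cup\mathcal{Z}_2\in\C{\beta}$. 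Padding the shorter decomposition with empty families so that $k=\max(k_1,k_2)$ serves both sides then yields $(\mathcal{X}\cup\mathcal{Y})\rdecomp(\mathcal{Z}_1\cup\mathcal{Z}_2)$. The main subtlety is not analytic but bookkeeping: part (2) at $\alpha$ needs both (1) outright and (2) at strictly smaller ordinals, so the cleanest presentation is a simultaneous transfinite induction over the three claims rather than proving them in isolation.
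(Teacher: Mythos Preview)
Your arguments for all three parts are correct. The paper's own proof is essentially a one-liner: it says that (1) follows from the definition, (2) is an easy consequence of the definition, and (3) follows from coarse invariance. For (1) and (2) you are simply supplying the details the paper omits; in particular your observation that (2) at level $\alpha$ requires both (1) and the inductive hypothesis of (2) at strictly smaller ordinals is exactly the bookkeeping the authors are sweeping under the phrase ``easy consequence.'' One small remark on presentation: your argument for (1) does not actually use an inductive hypothesis on $\alpha$, so calling it transfinite induction is unnecessary---it is a direct case split on whether $\beta=0$ or $\beta>0$.

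The genuine difference is in part (3). The paper deduces it from Theorem~3.1 (Coarse Invariance) by noting that $\mathcal{X}\prec\mathcal{Y}$ yields a coarse embedding $F\colon\mathcal{X}\to\mathcal{Y}$ given by the inclusion maps (with $\rho_1=\rho_2=\mathrm{id}$). Your direct induction is in effect the specialization of the Coarse Invariance proof to this trivial control-function case: the intersect-with-$X$ step and the appeal to the inductive hypothesis at $\beta$ are exactly what remain of that proof when $\rho_1,\rho_2$ are identities. The paper's route is more economical once Coarse Invariance is in hand; your route is self-contained and makes clear that no analytic content is needed for (3). Note that your first option for (3)---using $\mathcal{Z}$ directly without intersecting---is slightly loose, since the decomposition pieces of $Y_X$ need not be subspaces of $X$; your second option via $Z\cap X$ and the inductive hypothesis is the clean one.
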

\begin{proof} Statement (1) follows from the definition. Statement (2) is an easy consequence of the definition. Statement (3) follows from coarse invariance. 
\end{proof}

\begin{decomposition}\label{thm:decomposition}
 Let $\alpha>0$ be an ordinal number. If $\mathcal{Y} \in \C{\alpha}$ and for each $R \in \mathbb{R}^\mathbb{N}$ we have
  $\mathcal{X} \rdecomp \mathcal{Y}$, then $\mathcal{X} \in \C{\alpha}$.
\end{decomposition}
\begin{proof}
For each $R \in \mathbb{R}^\mathbb{N}$ let $\tilde R \in \mathbb{R}^{\mathbb{N}^2}$ be the rearrangement shown in Figure~\ref{fig:rearrangement} (any fixed rearrangement would work).
\begin{figure}
\begin{multicols}{2}
\[
\begin{array}{ccccc}
\vdots & \vdots & \vdots & \vdots & \reflectbox{$\ddots$} \\
\tilde R_{4,1} & \tilde R_{4,2} & \tilde R_{4,3} & \tilde R_{4,4} & \ldots\\
\tilde R_{3,1} & \tilde R_{3,2} & \tilde R_{3,3} & \tilde R_{3,4} & \ldots\\
\tilde R_{2,1} & \tilde R_{2,2} & \tilde R_{2,3} & \tilde R_{2,4} & \ldots\\
\tilde R_{1,1} & \tilde R_{1,2} & \tilde R_{1,3} & \tilde R_{1,4} & \ldots\\
\end{array}
\]
\break
\[
\begin{array}{ccccc}
\vdots & \vdots & \vdots & \vdots & \reflectbox{$\ddots$} \\
R_{10} & R_{14} & R_{19} & R_{25} & \ldots\\
R_6 & R_9 & R_{13} & R_{18} & \ldots\\
R_3 & R_5 & R_8 & R_{12} & \ldots\\
R_1 & R_2 & R_4 & R_7 & \ldots\\
\end{array}
\]
\end{multicols}
\caption{A rearrangement of the sequence $R$ into a $2$-dimensional array.}
\label{fig:rearrangement}
\end{figure}
We will prove that for each $R \in \mathbb{R}^\mathbb{N}$
  there exists a $\beta < \alpha$, a $\mathcal{Z} \in \C{\beta}$, and a number $m$ such that for each $X \in \mathcal{X}$ there exist at most $m$-many collections $\U_{i,j}$ such that
$\U_{i,j} \subset \mathcal{Z}$,
$\U_{i,j}$ is $\tilde R_{i,j}$-disjoint, and
$\bigcup_{i,j} \U_{i,j}$ covers~$X$.
This means that $\mathcal{X} \rdecomp \mathcal{Z}$ and implies that $\mathcal{X} \in \C{\alpha}$.

Let some $R\in\mathbb{R}^\mathbb{N}$ be given and form $\tilde R$. For each pair of natural numbers $(i,j)$, put $S_i^j=\tilde R_{i,j}$. Then, for each fixed $j$, we consider the sequence $S^j=(S^j_i)_{i\ge 1}$ that corresponds to the $j$-th column of $\tilde R$. Without loss of generality, we may assume $S^j$ to be increasing for each $j$.

By the assumption that $\mathcal{Y}\in \C{\alpha}$, for each $j$ there is an ordinal $\beta_j<\alpha$, a family $\mathcal{Z}_j\in\C{\beta_j}$ and an integer $k_j\ge 1$ such that $\mathcal{Y}$ uniformly $S_j$-decomposes over $\mathcal{Z}_j$ into $k_j$-many families satisfying the conditions of Definition~\ref{def:rdecomp}.

Next, for each $j$, put $P_j=S^j_{k_j}$ and consider the sequence $P=(P_j)_{j\ge 1}$. By the assumptions, there is a uniform $P$-decomposition of $\mathcal{X}$ over $\mathcal{Y}$. Take the number $k$ from the uniform $P$-decomposition. Put $\beta=\max\{\beta_1,\ldots,\beta_k\}$ and observe that $\beta<\alpha$. Finally, set $\mathcal{Z}=\bigcup_{j=1}^k\mathcal{Z}_j$.

Now, fix an arbitrary $X\in\mathcal{X}$ and find families $\mathcal{U}_1, \mathcal{U}_2,\ldots,\mathcal{U}_k$ in $\mathcal{Y}$ such that $\mathcal{U}_j$ is $P_j$-disjoint and so that $\bigcup_j \mathcal{U}_j$ covers $X$.

Write the elements of $\mathcal{U}_j$ as $\mathcal{U}_j=\{U^j_\ell\}_{\ell}$. Since we have $\U_j \xrightarrow{S^j} \mathcal{Z}_j \in \C{\beta_j}$ with $k_j$ families, for each $\ell$ we have a sequence of families $\V^{j,\ell}_1, \V^{j,\ell}_2, \ldots, \V^{j,\ell}_{k_j}$
such that $\V^{j,\ell}_i \subset \mathcal{Z}_j$, $\V^{j,\ell}_i$ is $\tilde R_{i,j}$-disjoint and $\bigcup_i \V^{j,\ell}_i$ covers $U^j_\ell$.

Put $\U_{i,j} = \bigcup_\ell \V^{j,\ell}_i$. We see that from Lemma~\ref{lem:basic facts}, that $\mathcal{Z}$ is in $\C{\beta}$ and note that $\U_{i,j}\subset \mathcal{Z}$. 

Next, $\bigcup_{i,j} \U_{i,j}$ covers $X$ since $\bigcup_i \U_{i,j}$ covers $\bigcup \U_j$ for each $j$ and $\bigcup_j \U_j$ covers~$X$.

Finally, we show that each $\U_{i,j}$ is $\tilde R_{i,j}$-disjoint. We know that each family $\mathcal{V}_i^{j,\ell}$ is $\tilde R_{i,j}$-disjoint. Since $\U_{i}$ is $\tilde R_{k_j,j}$-disjoint, we know that when $\ell_1\neq \ell_2$, then $U^j_{\ell_1}$ and $U^j_{\ell_2}$ are at least $\tilde R_{k_j,j}$ apart. Because $\tilde R_{k_j,j}\ge \tilde R_{i,j}$ whenever $i\in\{1,2,\ldots, k_j\}$, we conclude that $\mathcal{U}_{i,j}$ is $\tilde R_{i,j}$-disjoint for $j=1,2,\ldots, k$ and $i=1,2,\ldots, k_j$.

To complete the proof, we unravel the indexing $\tilde R_{i,j}$ back to the original $R_i$ and observe that we have at most $m:= k\cdot \max\{k_1,\ldots, k_k\}$ families, where we fill in any gaps in the sequence with empty families as necessary. Note that the value of $m$ depends only on $R$ and not on the space $X$.
\end{proof}

\begin{lemma}\label{lem:apc is c1}
A family $\mathcal{X}$ of metric spaces
has uniform asymptotic property~C
if and only if
$\mathcal{X} \in \C{1}$.
\end{lemma}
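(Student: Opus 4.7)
The plan is to observe that this equivalence is essentially a matter of unfolding the two definitions, so the ``proof'' will be a short verification rather than a substantive argument. First I would note that in the definition of $\C{1}$, the only ordinal $\beta$ with $\beta < 1$ is $\beta = 0$, so the membership condition collapses to: for every $R \in \mathbb{R}^\mathbb{N}$, there exists $\mathcal{Y} \in \C{0}$ with $\mathcal{X} \rdecomp \mathcal{Y}$. By definition of $\C{0}$, such a $\mathcal{Y}$ is exactly a bounded family of metric spaces.

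Next I would compare this to the definition of uniform asymptotic property C, which asks that for every $R \in \mathbb{R}^\mathbb{N}$ there exist a bounded family $\mathcal{Y}_R$ with $\mathcal{X} \rdecomp \mathcal{Y}_R$. The two statements are word-for-word the same (with $\mathcal{Y}_R$ simply renamed $\mathcal{Y}$ and its dependence on $R$ built into the quantifier order), so the forward and backward implications are both immediate.

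Since there is no real obstacle here, the only thing to be careful about is to make the proof a single clean paragraph checking both directions, rather than splitting it into two symmetric halves that would waste space. I would present it as: ``By definition, $\mathcal{X} \in \C{1}$ means that for every sequence $R$ there exists $\mathcal{Y} \in \C{0}$ (i.e., a bounded family) with $\mathcal{X} \rdecomp \mathcal{Y}$, which is precisely the definition of uniform asymptotic property~C for $\mathcal{X}$.'' This one sentence suffices for both directions and makes the lemma a trivial but useful bridge between the ordinal hierarchy $\C{\alpha}$ and the APC terminology, so that Theorem~\ref{thm:decomposition C} can be read off immediately from the Decomposition Theorem applied with $\alpha = 1$.
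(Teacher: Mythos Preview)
Your proposal is correct and matches the paper's own proof essentially word for word: both unwind the definition of $\C{1}$, observe that the only $\beta<1$ is $\beta=0$, and identify $\C{0}$ with bounded families to recover the definition of uniform asymptotic property~C. There is nothing to add.
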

\begin{proof} The family $\mathcal{X}$ is said to have uniform asymptotic property~C if for every sequence $R\in\mathbb{R}^{\mathbb{N}}$ there is a bounded family $\mathcal{Y}_R$ of metric spaces with the property that $\mathcal{X}\rdecomp\mathcal{Y}$. But, $\mathcal{Y}_R$ is bounded if and only if $\mathcal{Y}_R\in\C{0}$. We observe that $\mathcal{X}\in\C{1}$ if and only if for every $R$ there is some ordinal $\alpha<1$ and some $\mathcal{Y}\in \C{\alpha}$ so that $\mathcal{X}\rdecomp \mathcal{Y}$.
\end{proof}

\begin{proof}[Proof of Theorem~\ref{thm:decomposition C}]
  Apply the Decomposition Theorem for $\alpha = 1$ and then apply  Lemma~\ref{lem:apc is c1}.
\end{proof}

When we compare $\APC$ to FDC, we see that $\mathcal{X}$ has FDC if and only if $\mathcal{X}$ is in $\mathfrak{D}_\alpha$ for a countable ordinal $\alpha$~\cite[Theorem 2.2.2]{Guentner-Tessera-Yu-2013}. We have a similar result for our finite APC-decomposition complexity. 

\begin{theorem} A metric family has finite APC-decomposition complexity precisely when it belongs to $\C{\alpha}$ for some countable ordinal $\alpha$; i.e., $\C{}=\C{\omega_1}$.
\end{theorem}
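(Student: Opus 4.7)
The inclusion $\C{\omega_1} \subseteq \C{}$ is immediate from the definitions. The content of the theorem is the reverse inclusion, and the plan is to prove it by transfinite induction on $\alpha$: for every ordinal $\alpha$ I would show $\C{\alpha} \subseteq \bigcup_{\beta < \omega_1}\C{\beta}$. Since $\bigcup_{\beta < \omega_1}\C{\beta} \subseteq \C{\omega_1}$ by Lemma~\ref{lem:basic facts}(1), taking the union over $\alpha$ yields $\C{} \subseteq \C{\omega_1}$.

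The base case $\alpha = 0$ is trivial. For the inductive step, I would fix $\alpha > 0$, assume $\C{\beta} \subseteq \bigcup_{\gamma < \omega_1}\C{\gamma}$ for all $\beta < \alpha$, and take $\mathcal{X} \in \C{\alpha}$. Two preliminary reductions make the bookkeeping manageable. First, it suffices to verify the $R$-decomposition condition for $R \in \mathbb{N}^\mathbb{N}$: any $R \in \mathbb{R}^\mathbb{N}$ is dominated pointwise by $R'_i := \max(0,\lceil R_i\rceil)$, and any $R'$-decomposition is automatically an $R$-decomposition. Second, a uniform $R$-decomposition uses only some fixed number $k$ of families, so its existence depends only on the finite tuple $(k, R_1, \ldots, R_k) \in \bigsqcup_{k \ge 1}\mathbb{N}^{k+1}$, and the set of such tuples is countable.

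For each tuple $d$ that admits a witness---that is, for which there exist $\beta_d < \alpha$ and $\mathcal{Y}_d \in \C{\beta_d}$ realizing a $d$-decomposition of $\mathcal{X}$---I would fix one such $\mathcal{Y}_d$ via countable choice. By the induction hypothesis, each $\mathcal{Y}_d$ lies in $\C{\gamma_d}$ for some $\gamma_d < \omega_1$. Since $\omega_1$ is regular, the countable supremum $\gamma^* := \sup_d \gamma_d$ is itself below $\omega_1$. Every $R \in \mathbb{N}^\mathbb{N}$ then has some tuple $d_R$ with fixed witness $\mathcal{Y}_{d_R} \in \C{\gamma^*}$ giving an $R$-decomposition of $\mathcal{X}$, so $\mathcal{X} \in \C{\gamma^* + 1}$ with $\gamma^* + 1 < \omega_1$, closing the induction.

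The hard part is the passage from the uncountable index set $\mathbb{R}^\mathbb{N}$ of sequences $R$ to a countable one, since without it the supremum of witness ranks could in principle reach $\omega_1$ and the argument would collapse. The two reductions---integer sequences suffice, and each decomposition is encoded by a finite integer tuple---are elementary but indispensable; once they are in place, regularity of $\omega_1$ does the remaining work and the induction closes cleanly.
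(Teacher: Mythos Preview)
Your proof is correct and follows essentially the same route as the paper: reduce to integer sequences, observe that a uniform $R$-decomposition is determined by a finite initial segment of $R$ (so only countably many data are needed), choose one witness $\mathcal{Y}$ for each, and then use regularity of $\omega_1$ inside a transfinite induction. The paper isolates the countability step as a separate lemma and leaves the induction implicit, whereas you spell both out explicitly; the underlying argument is the same.
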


This follows from the following lemma.

\begin{lemma}
If $\mathcal{X}\in \C{\alpha}$ for some $\alpha>0$, then there is a collection $\{\mathcal{Y}_m\}_{m=1}^\infty$ such that for every $R\in \mathbb{R}^{\mathbb{N}}$, there is an $n$ such that $\mathcal{X}\xrightarrow{R}\mathcal{Y}_n$ and $\mathcal{Y}_n\in\C{\beta}$ for some $\beta<\alpha$. 
\end{lemma}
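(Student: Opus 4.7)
The plan is to enumerate a countable collection of finite positive-integer sequences, apply the hypothesis to each carefully-chosen infinite extension, and use minimality of the decomposition length to handle arbitrary $R \in \mathbb{R}^{\mathbb{N}}$.

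First, since a larger value of $r$ gives a stricter $r$-disjointness condition, it suffices to handle $R \in \mathbb{N}^{\mathbb{N}}$ non-decreasing: for any $R \in \mathbb{R}^{\mathbb{N}}$, the sequence $R'_i := \lceil \max(R_1, \ldots, R_i) \rceil$ has $R'_i \ge R_i$, and any $R'$-decomposition is automatically an $R$-decomposition. I would then enumerate $\mathbb{N}^{<\omega}$ as $\sigma_1, \sigma_2, \ldots$ and, for each $\sigma = (m_1, \ldots, m_k)$, form its constant-padded extension $\hat\sigma \in \mathbb{R}^{\mathbb{N}}$ by $\hat\sigma_i = m_i$ for $i \le k$ and $\hat\sigma_i = m_k$ for $i > k$. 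Applying the hypothesis to $\hat\sigma_n$, the set of integers $k$ realized by some decomposition $\mathcal{X} \xrightarrow{\hat\sigma_n} \mathcal{Y}$ with $\mathcal{Y} \in \C{\beta}$, $\beta < \alpha$, is a non-empty subset of $\mathbb{N}$. I would choose a witness $\mathcal{Y}_n$ (and corresponding $k_n$, $\beta_n$) so that $k_n$ is the \emph{minimum} element of this set.

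To verify the claim, let $R \in \mathbb{R}^{\mathbb{N}}$ be given, replace it by $R'$ as above, and apply the hypothesis to $R'$ to obtain a witness $\mathcal{Y}_{R'} \in \C{\beta_{R'}}$ (with $\beta_{R'} < \alpha$) and a decomposition $\mathcal{U}_1, \ldots, \mathcal{U}_{k_{R'}}$ with $\mathcal{U}_i \subseteq \mathcal{Y}_{R'}$ being $R'_i$-disjoint. Set $\sigma := (R'_1, \ldots, R'_{k_{R'}})$ and let $n$ be its index in the enumeration. The families $\mathcal{U}_1, \ldots, \mathcal{U}_{k_{R'}}$ also constitute a valid $\hat\sigma_n$-decomposition of $\mathcal{X}$ over $\mathcal{Y}_{R'}$: for $i \le k_{R'}$ the value $\hat\sigma_{n,i}$ equals $R'_i$ by construction, and the families still cover $X$. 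Hence $k_n \le k_{R'}$ by minimality. Then the $\mathcal{Y}_n$-decomposition of $\mathcal{X}$ uses $k_n$ families with the $i$-th being $\hat\sigma_{n,i}$-disjoint; for $i \le k_n \le k_{R'}$ we have $\hat\sigma_{n,i} = R'_i \ge R_i$, so the $i$-th family is $R_i$-disjoint. This gives $\mathcal{X} \xrightarrow{R} \mathcal{Y}_n$ with $\mathcal{Y}_n \in \C{\beta_n}$, $\beta_n < \alpha$, as required.

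The step I expect to be most delicate is the minimality choice: without minimizing $k_n$, the $\hat\sigma_n$-decomposition could a priori use strictly more families than $k_{R'}$, in which case the padded tail value $\hat\sigma_{n,i} = R'_{k_{R'}}$ for $i > k_{R'}$ might fail to dominate the (possibly unbounded) values $R_i$, and the $R$-disjointness conclusion would collapse. Taking $k_n$ minimal, together with the observation that the $R'$-decomposition itself provides a concrete upper bound for that minimum, resolves this in one stroke.
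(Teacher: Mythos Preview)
Your proof is correct and follows essentially the same idea as the paper's: only a finite initial segment of $R$ matters, and there are countably many such segments. The paper phrases this by defining $\bar R$ to be the shortest initial segment on which a valid decomposition (over some $\mathcal{Y}\in\C{\beta}$, $\beta<\alpha$) already exists, and then noting that the equivalence relation $R_1\sim R_2\iff \bar R_1=\bar R_2$ has countably many classes; one witness $\mathcal{Y}$ is chosen per class. Your enumeration of $\mathbb{N}^{<\omega}$ with constant padding and your minimality choice of $k_n$ accomplish exactly the same reduction, just written out more explicitly. The monotone replacement $R\mapsto R'$ is a harmless extra step (the paper simply says ``it suffices to consider sequences of natural numbers'') and is not actually needed beyond passing to integer values, but it does no damage.
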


\begin{proof} It suffices to consider sequences $R$ of natural numbers. 
Given such a sequence $R=\left(n_1,n_2,\ldots\right)$, take $\bar{R}$ to be the shortest initial segment $(n_1,\ldots, n_k)$ so that for every $X\in\mathcal{X}$ there is a $\mathcal{Y}$ and subsets $\mathcal{U}_i$ of $\mathcal{Y}$, ($i=1,\ldots,k$) such that $\mathcal{U}_i$ is $n_i$-disjoint and so that $\bigcup_{i=1}^k\mathcal{U}_i$ covers $X$. 

Define an equivalence relation on sequences of natural numbers by $R_1\sim R_2$ if and only if $\bar{R_1}=\bar{R_2}$. We note that there are countably many such equivalence classes. For each class $[R]$, we choose some $\mathcal{Y}$ in $\C{\beta}$ for some $\beta<\alpha$ such that if $P\in[R]$, then $\mathcal{X}\xrightarrow{P}\mathcal{Y}$. 
\end{proof}

\section{Permanence Theorems for $\APC$}

\subsection{Permanence results for spaces}

In Guentner's survey~\cite{Guentner-Permanence}, he describes four so-called ``primitive permanence results'' for spaces: coarse invariance, union permanence, fibering permanence, and limit permanence. We establish these results before stating the ``derived permanence results'' that follow from the primitive results. 

We have already seen that $\C{\alpha}$ is a coarse invariant in Theorem~\ref{thm:coarse-invariant}.

To simplify matters, we state the union theorem in terms of a single metric space instead of a metric family. 

\begin{theorem}\label{thm:GTY-union} Let $X$ be a metric space that is expressed as a union $X=\bigcup_{i\in I}X_i$. Suppose further that $\{X_i\}_i\in\C{\alpha}$ and that for every $r>0$ there is a subspace $Y=Y(r)\subset X$ with $\{Y(r)\}\in\C{\alpha}$ such that the collection $\{X_i-Y\}$ is $r$-disjoint. Then, $\{X\}\in\C{\alpha+1}$. In particular, finite APC-decomposition complexity is preserved by so-called excisive unions.
\end{theorem}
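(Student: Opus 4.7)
The plan is to show directly, from the definition of $\C{\alpha+1}$, that $\{X\} \in \C{\alpha+1}$. So I would fix an arbitrary sequence $R = (R_1, R_2, \ldots) \in \mathbb{R}^{\mathbb{N}}$ and try to construct an auxiliary family $\mathcal{Y}_R \in \C{\alpha}$ together with a uniform $R$-decomposition $\{X\} \rdecomp \mathcal{Y}_R$. Since the family $\mathcal{Y}_R$ is allowed to depend on $R$, the only budget constraint we need to respect is the disjointness scale $R_1$ on the first level of the decomposition.

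The natural choice is to let $r := R_1$, take the subspace $Y = Y(r)$ provided by the hypothesis, and set
\[
\mathcal{Y}_R := \{Y\} \cup \{\, X_i \setminus Y \,\}_{i \in I}.
\]
For the decomposition itself, I would take only two layers: $\U_1 := \{\, X_i \setminus Y \,\}_{i \in I}$ and $\U_2 := \{Y\}$. Then $\U_1$ is $R_1$-disjoint by the standing hypothesis on $Y(r)$, $\U_2$ is vacuously $R_2$-disjoint as a single-element family, and $\U_1 \cup \U_2$ covers $X$ since $\bigcup_i X_i = X$. This gives $\{X\} \rdecomp \mathcal{Y}_R$ with $k = 2$, uniformly (there is only one space being decomposed, but the constant $k$ does not depend on anything anyway).

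It remains to check that $\mathcal{Y}_R$ belongs to $\C{\alpha}$. Here I would invoke Lemma~\ref{lem:basic facts}: the family $\{ X_i \setminus Y \}_i$ satisfies $\{ X_i \setminus Y \}_i \prec \{ X_i \}_i$, and $\{ X_i \}_i \in \C{\alpha}$ by hypothesis, so part~(3) of that lemma gives $\{ X_i \setminus Y \}_i \in \C{\alpha}$; the singleton family $\{Y\}$ is in $\C{\alpha}$ by hypothesis; and part~(2) of the lemma then yields $\mathcal{Y}_R \in \C{\alpha}$. Combined with the paragraph above and the definition of $\C{\alpha+1}$, this produces the desired conclusion $\{X\} \in \C{\alpha+1}$.

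Honestly, there is no serious obstacle: the construction is almost forced by the bookkeeping in the definition of $\C{\alpha+1}$, and the ``excisive'' hypothesis is tailored exactly so that a single disjointness scale $R_1$ suffices to separate the $X_i$ once $Y(R_1)$ is excised. The only place where one should take a little care is the appeal to $\{ X_i \setminus Y \}_i \prec \{ X_i \}_i$, which uses coarse invariance via Lemma~\ref{lem:basic facts}(3) rather than any direct argument; everything else is a matter of unwinding definitions.
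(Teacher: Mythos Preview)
Your proposal is correct and follows essentially the same argument as the paper: fix $R$, set $r=R_1$, take $\U_1=\{X_i\setminus Y(r)\}_{i\in I}$ and $\U_2=\{Y(r)\}$, and appeal to Lemma~\ref{lem:basic facts} (parts (2) and (3)) to conclude that the target family lies in $\C{\alpha}$. You are simply more explicit about which parts of the lemma are invoked and why $\U_2$ is $R_2$-disjoint, but the route is identical.
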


\begin{proof}
For a given $R\in\mathbb{R}^{\mathbb{N}}$, take $r=R_1$. Consider the collection $\{X_i-Y(r)\}_{i\in I}$. Since $\{X_i\}\in \C{\alpha}$, by Lemma~\ref{lem:basic facts}, we see that $\{X_i-Y(r)\}\in\C{\alpha}$. With $\U_{1}=\{X_i-Y(r)\}_{i\in I}$ and $\U_{2}=\{Y(r)\}$, we again apply Lemma~\ref{lem:basic facts} to see that $\{X\}$ is uniformly $R$-decomposable over a family from $\C{\alpha}$.
\end{proof}

\begin{corollary}\cite[Proposition 6.5]{dydak2016}
If $X$ and $Y$ have finite APC-decomposition complexity, then $X\cup Y$ has finite APC-decomposition complexity.
\end{corollary}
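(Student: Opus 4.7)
The plan is to deduce the corollary directly from the union permanence theorem, Theorem~\ref{thm:GTY-union}. First, since $X$ and $Y$ both have finite APC-decomposition complexity, there are ordinals $\alpha_X$ and $\alpha_Y$ with $\{X\} \in \C{\alpha_X}$ and $\{Y\} \in \C{\alpha_Y}$. Setting $\alpha = \max\{\alpha_X, \alpha_Y\}$, Lemma~\ref{lem:basic facts}(1) and (2) together give $\{X, Y\} \in \C{\alpha}$, so the two-element cover of $X \cup Y$ by $X$ and $Y$ satisfies the standing hypothesis of Theorem~\ref{thm:GTY-union}.

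Next I would verify the excision hypothesis. Express $X \cup Y$ as $X_1 \cup X_2$ with $X_1 = X$ and $X_2 = Y$, and for each $r > 0$ choose the excising subspace $Y(r) \subset X \cup Y$ to be $Y(r) = X$. Then $\{Y(r)\} = \{X\} \in \C{\alpha}$, while $X_1 - Y(r) = \emptyset$ and $X_2 - Y(r) = Y \setminus X$, so the two-element family $\{X_i - Y(r)\}_{i = 1,2}$ has at most one non-empty member and is vacuously $r$-disjoint for every $r$. Theorem~\ref{thm:GTY-union} then yields $\{X \cup Y\} \in \C{\alpha+1}$, which is precisely the statement that $X \cup Y$ has finite APC-decomposition complexity.

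There is really no obstacle here: the proof is a verification that the hypotheses of the union permanence theorem hold for the chosen cover and excising subspace, and they do by construction. A symmetric choice $Y(r) = Y$ works equally well, and in either case the argument produces the explicit bound $\max\{\alpha_X, \alpha_Y\} + 1$ on the APC-decomposition depth of $X \cup Y$ in terms of the depths of $X$ and $Y$, which is the sort of quantitative refinement over Dydak's corresponding result that the authors advertise in the introduction.
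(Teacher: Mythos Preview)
Your proposal is correct and is precisely the intended derivation: the paper states this corollary immediately after Theorem~\ref{thm:GTY-union} without a separate proof, and your argument---choosing the finite cover $\{X,Y\}$ and the excising subspace $Y(r)=X$ so that the remaining pieces are $\{\emptyset,\,Y\setminus X\}$---is the natural verification that the hypotheses of that theorem are met. The explicit depth bound $\max\{\alpha_X,\alpha_Y\}+1$ you record is also correct.
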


We can improve Theorem~\ref{thm:GTY-union} by applying the Decomposition Theorem to show that $\{X\}$ actually belongs to $\C{\alpha}$.

\begin{theorem} [Union Permanence] Let $X$ be a metric space with $X=\bigcup_{i\in I}X_i$. Suppose that $\{X_i\}_i\in\C{\alpha}$ and for every $r>0$ there is a subspace $Y(r)\subset X$ so that $\{Y(r)\}_{r>0}\in\C{\alpha}$ and the collection $\{X_i-Y\}_i$ is $r$-disjoint. Then, $X\in\C{\alpha}$.
\end{theorem}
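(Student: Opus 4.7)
The plan is to invoke the Decomposition Theorem so that we land back in $\C{\alpha}$ rather than being bumped up to $\C{\alpha+1}$ as in Theorem~\ref{thm:GTY-union}. For this I need a single family $\mathcal{Y}^\ast \in \C{\alpha}$ that accommodates the decomposition for \emph{every} sequence $R$; the Decomposition Theorem will then absorb the extra ordinal level.

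First I would set
\[
\mathcal{Y}^\ast = \{X_i - Y(r) : i \in I,\ r > 0\} \cup \{Y(r) : r > 0\}.
\]
Since $X_i - Y(r) \subset X_i$ for every $i$ and $r$, we have $\{X_i - Y(r)\}_{i,r} \prec \{X_i\}_i$, and Lemma~\ref{lem:basic facts}(3) applied to $\{X_i\}_i \in \C{\alpha}$ then gives $\{X_i - Y(r)\}_{i,r} \in \C{\alpha}$. By hypothesis $\{Y(r)\}_{r>0} \in \C{\alpha}$, so Lemma~\ref{lem:basic facts}(2) yields $\mathcal{Y}^\ast \in \C{\alpha}$.

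Next, given an arbitrary $R \in \mathbb{R}^\mathbb{N}$, consider the splitting
\[
\U_1 = \{X_i - Y(R_1)\}_{i \in I}, \qquad \U_2 = \{Y(R_1)\}.
\]
Both collections are subsets of $\mathcal{Y}^\ast$. The hypothesis that $\{X_i - Y\}_i$ is $r$-disjoint for $r = R_1$ makes $\U_1$ exactly $R_1$-disjoint, while $\U_2$ is a singleton and hence trivially $R_2$-disjoint. Since $\U_1 \cup \U_2$ covers $X$, we conclude $\{X\} \rdecomp \mathcal{Y}^\ast$ with the uniform constant $k = 2$.

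Because this holds for every $R$ against the same family $\mathcal{Y}^\ast \in \C{\alpha}$, the Decomposition Theorem applies and yields $\{X\} \in \C{\alpha}$. The main obstacle, such as it is, is simply the conceptual step of packaging \emph{all} of the residual pieces $X_i - Y(r)$ and \emph{all} of the obstacle pieces $Y(r)$, over all $r > 0$, into one family that still lies in $\C{\alpha}$; without this uniform packaging one would only recover Theorem~\ref{thm:GTY-union} (with its loss of one ordinal level), since a family $\mathcal{Z}_R$ that varies with $R$ would not be enough to trigger the Decomposition Theorem.
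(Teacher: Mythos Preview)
Your proof is correct and essentially identical to the paper's own argument: you build the same family $\mathcal{Y}^\ast=\{Y(r)\}_{r>0}\cup\{X_i-Y(r)\}_{i,r}$, verify it lies in $\C{\alpha}$ via Lemma~\ref{lem:basic facts}(2),(3), decompose with $\U_1=\{X_i-Y(R_1)\}_i$ and $\U_2=\{Y(R_1)\}$, and then invoke the Decomposition Theorem. If anything, you spell out the application of Lemma~\ref{lem:basic facts} and the uniform constant $k=2$ more explicitly than the paper does.
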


\begin{proof}
To apply the Decomposition Theorem, we have to find a family $\mathcal{Y}$ so that for every $R$, we have $\mathcal{X}\xrightarrow{R}\mathcal{Y}$. 

Put $\mathcal{Y}=\{Y(r)\}_{r>0}\cup \bigcup_{r>0,i}\{X_i-Y(r)\}$. Given $R$, we take $\U_{1}=\{X_i-Y(R_1)\}_{i\in I}$ and $\U_{2}=\{Y(R_1)\}$. as in the proof of Theorem~\ref{thm:GTY-union}, we apply (2) and (3) from Lemma~\ref{lem:basic facts} to see that $\{X\}$ is uniformly $R$-decomposable over $\mathcal{Y}$, which is in $\C{\alpha}$.
\end{proof}

Before proving Fibering Permanence, we consider the case of direct products. We consider this separately to compare with the product theorem for APC~\cite{bn-stability}.

\begin{definition}
  For any collections $\mathcal{X}, \mathcal{Y}$ we let
  \[
    \mathcal{X} \otimes \mathcal{Y} = \{ X \times Y \colon X \in \mathcal{X}, Y \in \mathcal{Y} \}.
  \]
  We give spaces in $\mathcal{X}\otimes\mathcal{Y}$ the product metric: \[d_{X\times Y}(x\times y, x'\times y')=\sqrt{d_X(x,x')^2+d_Y(y,y')^2\text{.}}\]
\end{definition}

\begin{product} Let $\mathcal{X}$ and $\mathcal{Y}$ be in $\C{\alpha}$. Then, $\mathcal{X}\otimes\mathcal{Y}\in\C{\alpha}$.
\end{product}

\begin{proof}
We proceed by induction. For $\alpha=0$ this is obvious.

We use the technique and much of the notation from the Decomposition Theorem.

As in that proof, we rearrange a given sequence into a $2$-dimensional array as shown in Figure~\ref{fig:rearrangement}. We find decompositions of $\mathcal{X}$ over the columns $S^j$ of this array. We find $\mathcal{X}_j$ such that $\mathcal{X}\xrightarrow{S^j}\mathcal{X}_j$. Next, we use the sequence $P_j$ to find a decomposition of $\mathcal{Y}$ over some family $\mathcal{Y}^\ast$. Let $m$ be the number of families needed for such a decomposition.

Now, for any $X\times Y\in\mathcal{X}\otimes\mathcal{Y}$, the construction in the proof of~\cite[Theorem 3.1]{bn-stability} provides at most $k$ families $\mathcal{W}_j$ such that $\mathcal{W}_j$ is $R_j$-disjoint, $\bigcup_j \mathcal{W}_j$ covers $X\times Y$, and $\mathcal{W}_j\subset \bigcup_{i=1}^m \mathcal{X}_i\otimes \mathcal{Y}^\ast$. By the inductive assumption and Lemma~\ref{lem:basic facts}(2), we see that this union is in $\C{\gamma}$ with $\gamma<\alpha$.

\end{proof}

\begin{lemma} \label{lem:pre-fiber}
Let $F:\mathcal{X}\to \mathcal{Y}$ be a uniformly expansive map of metric families $\mathcal{X}$ and $\mathcal{Y}$. Let $\alpha$ be an ordinal and suppose $\mathcal{Y}'\prec\mathcal{Y}$  
Then the following are equivalent:
\begin{enumerate}
	\item if $\mathcal{Y}'\in\C{0}$, then $f^{-1}(\mathcal{Y}')\in\C{\alpha}$; and
    \item for any ordinal $\beta$ if $\mathcal{Y}'\in\C{\beta}$, it follows that $f^{-1}(\mathcal{Y}')\in\C{\alpha+\beta}$.
\end{enumerate}
\end{lemma}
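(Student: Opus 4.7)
The plan is straightforward: the direction (2)~$\Rightarrow$~(1) is immediate upon specializing $\beta=0$, since $\alpha+0=\alpha$. The substantive direction is (1)~$\Rightarrow$~(2), which I would prove by transfinite induction on $\beta$, applied uniformly to every subfamily $\mathcal{Y}'\prec\mathcal{Y}$. The base case $\beta=0$ is precisely hypothesis~(1), so nothing needs to be done there.

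For the successor step $\beta=\gamma+1$, I would fix $\mathcal{Y}'\prec\mathcal{Y}$ with $\mathcal{Y}'\in\C{\gamma+1}$ and fix an arbitrary sequence $R\in\mathbb{R}^{\mathbb{N}}$. Let $\rho_2$ be the control function witnessing uniform expansivity of $F$, and set $S_i:=\rho_2(R_i)$. By the definition of $\C{\gamma+1}$ applied to the sequence $S$, there exist $\delta\leq\gamma$ and a family $\mathcal{Z}\in\C{\delta}$ with $\mathcal{Y}'\xrightarrow{S}\mathcal{Z}$. Restricting $\mathcal{Z}$ to those of its elements that are subsets of elements of $\mathcal{Y}'$ (which by Lemma~\ref{lem:basic facts}(3) remains in $\C{\delta}$), we may arrange $\mathcal{Z}\prec\mathcal{Y}'\prec\mathcal{Y}$. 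Pulling back the $S$-decomposition through $F$ exactly as in the proof of Theorem~\ref{thm:coarse-invariant}---using $\rho_2$ to guarantee that preimages of $S_i$-disjoint families are $R_i$-disjoint---gives $F^{-1}(\mathcal{Y}')\xrightarrow{R}F^{-1}(\mathcal{Z})$. The induction hypothesis applied to $\mathcal{Z}$ at level $\delta$ produces $F^{-1}(\mathcal{Z})\in\C{\alpha+\delta}$. Since $R$ was arbitrary and $\alpha+\delta<\alpha+\gamma+1$, the definition of $\C{\alpha+\gamma+1}$ yields $F^{-1}(\mathcal{Y}')\in\C{\alpha+\gamma+1}$.

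The limit case is handled by the same template: given $R$, form $S$, obtain $\delta<\beta$ and $\mathcal{Z}\in\C{\delta}$ with $\mathcal{Y}'\xrightarrow{S}\mathcal{Z}$, restrict so that $\mathcal{Z}\prec\mathcal{Y}$, pull back via $F$, and apply the inductive hypothesis to conclude $F^{-1}(\mathcal{Z})\in\C{\alpha+\delta}$. Strict monotonicity of ordinal addition in the right argument gives $\alpha+\delta<\alpha+\beta$, so $F^{-1}(\mathcal{Y}')\in\C{\alpha+\beta}$.

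The main obstacle I anticipate is not a technical difficulty but a matter of careful ordinal bookkeeping. Because ordinal addition is non-commutative, one must check that each inductive step produces an element of $\C{\alpha+\delta}$ (with $\alpha$ on the left and $\delta<\beta$ on the right), rather than $\C{\delta+\alpha}$, which in general would fail to sit strictly below $\C{\alpha+\beta}$. The strict inequality $\alpha+\delta<\alpha+\beta$ used in both inductive steps relies on right-strict monotonicity of ordinal addition, which is precisely why the induction is run on the $\beta$ variable. A secondary point worth stating explicitly is that the pulled-back family $\mathcal{Z}$ can always be taken under $\mathcal{Y}$ in the $\prec$ order, which is what allows the induction hypothesis to apply to it.
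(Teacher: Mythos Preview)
Your proposal is correct and follows essentially the same approach as the paper: transfinite induction on $\beta$, pushing $R$ forward to $S$ via $\rho_2$, decomposing $\mathcal{Y}'$ over some $\mathcal{Z}\in\C{\delta}$ with $\delta<\beta$, restricting $\mathcal{Z}$ so that $\mathcal{Z}\prec\mathcal{Y}$, pulling the decomposition back through $F$, and invoking the induction hypothesis together with right-strict monotonicity of ordinal addition. The only cosmetic difference is that the paper treats all $\beta>0$ in a single step rather than separating the successor and limit cases, since the definition of $\C{\beta}$ already provides some $\delta<\beta$ regardless of whether $\beta$ is a successor or a limit.
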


\begin{proof}
It will suffice to prove (1) implies (2). We proceed by transfinite induction on $\beta$. 

Assume (1) holds. The case $\beta=0$ is obvious. Thus we may assume that $\beta>0$ and that the situation in (2) holds for every $\gamma<\beta$. Suppose $\mathcal{Y}'\in\C{\beta}$. 
Let $R\in\mathbb{R}^{\mathbb{N}}$ be given and consider the sequence $S$ defined by $S_i=\rho(R_i)$, where $\rho$ is the control function corresponding to the uniformly expansive map $F$. Using the sequence $S$, we find a $\gamma<\beta$ and $\mathcal{Z}\in\C{\gamma}$ so that $\mathcal{Y}'\xrightarrow{S}\mathcal{Z}$. Let $\mathcal{Z}'=\{Z\in\mathcal{Z}\colon \exists Y'\in\mathcal{Y}'\text{ such that } Z\subset Y'\}$. Then $\mathcal{Z}'\prec\mathcal{Y}$. By the induction hypothesis, $F^{-1}(\mathcal{Z})\in \C{\alpha+\gamma}$. To finish the proof, it remains to show that $F^{-1}(\mathcal{Y}')\rdecomp F^{-1}(\mathcal{Z})$. 

To this end, take the integer $k$ from the uniform $S$-decomposition $\mathcal{Y}'\xrightarrow{S}\mathcal{Z}$. Let $W\in F^{-1}(\mathcal{Y}')$; i.e., $W=f^{-1}(A)$ for some $A\in\mathcal{Y}'$ and $f\in F$. Take families $\U_1,\U_2,\ldots, \U_k\subset \mathcal{Z}$ such that $\U_j$ is $S_j$-disjoint and $\bigcup_j \U_j$ covers $A$. Then, put $\mathcal{V}_i=\{f^{-1}(U)\colon U\in \U_j\}$. 
Since each member of $\bigcup_j\mathcal{U}_j$ is a subset of $A$, we have that each of the $\mathcal{V}_j$ is a subset of $F^{-1}(\mathcal{Z}')$. We also have that each $\mathcal{V}_j$ is $R_j$-disjoint and $\bigcup_j\mathcal{V}_j$ covers $W$ as required. 
\end{proof}


\begin{fibering}\label{thm:fibering}
Let $F:\mathcal{X}\to \mathcal{Y}$ be a uniformly expansive map of metric families $\mathcal{X}$ and $\mathcal{Y}$. Suppose that there is some $\alpha$ so that for any bounded family $\mathcal{B}\prec \mathcal{Y}$, the set $F^{-1}(\mathcal{B})\in \C{\alpha}$. Then, if there is some $\beta$ such that $\mathcal{Y}\in\C{\beta}$, then $\mathcal{X}\in\C{\alpha+\beta}$.
\end{fibering}

\begin{proof}
Condition (1) of Lemma~\ref{lem:pre-fiber} is satisfied by $\mathcal{B}$, so condition (2) holds, which is what we needed to show.
\end{proof}

\begin{corollary} Suppose $f:X\to Y$ is a uniformly expansive map of metric spaces, $\{Y\}\in\C{\beta}$ for some $\beta$ and suppose that $f^{-1}(\mathcal{B})\in\C{\alpha}$ for every bounded family $\mathcal{B}$ of subsets of $Y$. Then, $\{X\}\in\C{\alpha+\beta}$. \qed 
\end{corollary}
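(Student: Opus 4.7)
The statement is the single-space specialization of the Fibering Permanence theorem just proved, so my plan is to deduce it by a direct application. I would set $\mathcal{X} := \{X\}$ and $\mathcal{Y} := \{Y\}$ and let $F := \{f\}$, which by definition of a map of metric families is a map $F \colon \mathcal{X} \to \mathcal{Y}$. Since $f$ is uniformly expansive as a single map, the one-element family $F$ is uniformly expansive in the sense of the definition (use the control function of $f$ as the common $\rho_2$).

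Next, I would verify that the hypotheses of Fibering Permanence translate verbatim. Any bounded family $\mathcal{B} \prec \mathcal{Y} = \{Y\}$ is, by the definition of $\prec$, precisely a bounded family of subsets of $Y$; and under the conventions for preimages of families, $F^{-1}(\mathcal{B}) = \{f^{-1}(B) : B \in \mathcal{B}\}$ is the same object denoted $f^{-1}(\mathcal{B})$ in the corollary's hypothesis. Thus the assumption that $f^{-1}(\mathcal{B}) \in \C{\alpha}$ for every bounded $\mathcal{B} \subset 2^Y$ is exactly the hypothesis that for any bounded $\mathcal{B} \prec \mathcal{Y}$ we have $F^{-1}(\mathcal{B}) \in \C{\alpha}$. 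Similarly, $\mathcal{Y} = \{Y\} \in \C{\beta}$ by assumption.

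Applying the Fibering Permanence theorem to $F$ with these parameters yields $\mathcal{X} = \{X\} \in \C{\alpha+\beta}$, which is the desired conclusion. There is no real obstacle here; the only thing to check carefully is that the bookkeeping of the one-element families and the preimage convention $F^{-1}(\mathcal{B})$ matches the setup of Fibering Permanence, which it does on the nose.
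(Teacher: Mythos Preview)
Your proposal is correct and is exactly the intended argument: the corollary is the single-space specialization of the Fibering Permanence theorem, and the paper indicates this by giving no proof beyond the \qed\ box. Your verification that $\mathcal{B}\prec\{Y\}$ coincides with ``bounded family of subsets of $Y$'' and that $F^{-1}(\mathcal{B})=f^{-1}(\mathcal{B})$ for the singleton $F=\{f\}$ is precisely the bookkeeping needed to invoke Fibering Permanence directly.
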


\begin{limit}
Let $\mathcal{X}=\{X_a\}_{a\in J}$ be a collection of metric spaces indexed by some indexing set $J$. Suppose that for every real number $r>0$, there is an expression $X_a=\bigcup_i X^i_a$ as an $r$-disjoint union such that for each $a$, the family $\{X^i_a\}_{i,a}$ belongs to $\C{\alpha}$. Then $\mathcal{X}$ belongs to $\C{\alpha+1}$.  
\end{limit}

\begin{proof} 
Let $R\in\mathbb{R}^{\mathbb{N}}$ be given. Let $X_a\in\mathcal{X}$. Write $X_a=\bigcup X_a^i$ as an $R_1$-disjoint union over the family $\{X^a_i\}$ with $\{X^i_a\}_i\in\C{\alpha}$. 
\end{proof}

This is useful in that it shows that $\APC$ satisfies limit permanence.

\begin{corollary} \cite[Corollary 11.3]{dydak2016} Finite APC-decomposition complexity satisfies limit permanence. \qed 
\end{corollary}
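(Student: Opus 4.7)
The plan is to derive this corollary as a direct consequence of the Limit Permanence theorem stated immediately above. Suppose $\mathcal{X} = \{X_a\}_{a \in J}$ is a metric family with the property that for every $r > 0$ there is an $r$-disjoint decomposition $X_a = \bigcup_i X_a^i$ whose collection of pieces $\{X_a^i\}_{i,a}$ belongs to $\APC$. I want to conclude that $\mathcal{X}$ itself belongs to $\APC$, which by the preceding theorem is the same as belonging to $\C{\omega_1}$.

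The obvious route is to invoke Limit Permanence with the ordinal $\alpha$ witnessed by the decomposition pieces, and then note that $\C{\alpha+1} \subseteq \APC$. The subtlety is that a priori the ordinal $\alpha_r$ for which $\{X_a^i\}_{i,a} \in \C{\alpha_r}$ could depend on $r$, whereas Limit Permanence, as stated, requires a single ordinal $\alpha$ that works uniformly across all $r$. This is precisely where I would invoke the theorem above identifying $\C{} = \C{\omega_1}$: every family with finite APC-decomposition complexity already lies in the single class $\C{\omega_1}$, so one may take $\alpha = \omega_1$ uniformly in $r$. Applying Limit Permanence then yields $\mathcal{X} \in \C{\omega_1 + 1} \subseteq \APC$, which is the desired conclusion.

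The main (and essentially only) obstacle is this uniform choice of ordinal across the parameter $r$, and it is resolved entirely by the characterisation $\C{} = \C{\omega_1}$. Once this uniformity is secured, the argument reduces to a single invocation of Limit Permanence, which is why the author can reasonably terminate the statement with $\qed$ rather than supplying a separate proof.
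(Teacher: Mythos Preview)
Your proposal is correct and matches the paper's approach: the corollary is marked \qed\ with no separate proof, the preceding sentence simply asserts that Limit Permanence ``shows that $\APC$ satisfies limit permanence,'' and you have correctly identified and resolved the one detail the paper leaves implicit (uniformity of $\alpha$ across $r$) by invoking the earlier theorem $\C{} = \C{\omega_1}$.
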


\subsection{Permanence results for groups}

Having proven the primitive permanence results, we turn our attention to derived results. Most of these are interesting in the context of groups. Throughout this section, all groups are assumed to be countable discrete groups in left-invariant proper metrics.

\begin{theorem}\label{thm:short-exact-sequences}
Let
\[
  1 \to K \to G \to H \to 1
\]
be a short exact sequence of groups.
If $K$ and $H$ have finite APC-decomposition complexity, then $G$ has finite APC-decomposition complexity.
\end{theorem}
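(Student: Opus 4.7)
The plan is to apply Fibering Permanence to the quotient homomorphism $\pi \colon G \to H$. After replacing the given metric on $H$ by its quotient metric (coarsely equivalent to any other proper left-invariant metric by Dranishnikov--Smith~\cite{dr-s06}), $\pi$ becomes $1$-Lipschitz and hence uniformly expansive. Since $\{H\} \in \C{\beta}$ for some $\beta$ by hypothesis, it will suffice to exhibit a single ordinal $\alpha$ such that $\pi^{-1}(\mathcal{B}) \in \C{\alpha}$ for every bounded family $\mathcal{B} \prec \{H\}$; Fibering Permanence will then deliver $\{G\} \in \C{\alpha + \beta} \subset \APC$. The claim is that $\alpha = \gamma$ works, where $\gamma$ is chosen so that $\{K\} \in \C{\gamma}$.

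To verify this claim, fix a bounded family $\mathcal{B}$ with $\diam(B) \leq D$ for all $B \in \mathcal{B}$; bounded geometry of $H$ then forces $|B| \leq N_D$. Fix a shortest-element section $s \colon H \to G$ (so that $|s(h)|_G = |h|_H$), and for each $B \in \mathcal{B}$ pick a basepoint $h_B \in B$. Then left-translation by $s(h_B)^{-1}$ is an isometry of $G$ carrying $\pi^{-1}(B)$ onto $\pi^{-1}(h_B^{-1} B)$; since every element of $h_B^{-1} B$ has $H$-norm at most $D$, this translate is contained in the single set
\[
\mathcal{N}_D := \bigcup_{g \in B_G(e, D)} g K,
\]
a union of at most $N_D$ cosets of $K$ that depends only on $D$.

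The key step---and the main obstacle---is to observe that $\mathcal{N}_D$ is coarsely equivalent to $K$ with control depending only on $D$. This is where normality of $K$ in $G$ enters essentially: for any $g_0 \in B_G(e, D)$ and $k \in K$ the element $k' := g_0 k g_0^{-1}$ again lies in $K$, and
\[
d_G(g_0 k, k') = |k^{-1} g_0^{-1} k'|_G = |g_0^{-1}|_G \leq D,
\]
so $K$ is $D$-dense in $\mathcal{N}_D$; without normality the conjugates $g_0 K g_0^{-1}$ could drift from $K$ and $\mathcal{N}_D$ would not be a bounded Hausdorff-distance thickening of $K$. Coarse Invariance (Theorem~\ref{thm:coarse-invariant}), applied to a nearest-point projection $\mathcal{N}_D \to K$, then gives $\{\mathcal{N}_D\} \in \C{\gamma}$; packaging the isometric embeddings $\pi^{-1}(B) \hookrightarrow \mathcal{N}_D$ into a single coarse embedding $\pi^{-1}(\mathcal{B}) \to \{\mathcal{N}_D\}$ and applying Coarse Invariance once more yields $\pi^{-1}(\mathcal{B}) \in \C{\gamma}$, as required.
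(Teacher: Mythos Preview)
Your proof is correct and follows the same route as the paper: apply Fibering Permanence to the quotient map, after arranging it to be $1$-Lipschitz, and verify the fiber hypothesis by showing that preimages of bounded sets in $H$ are (after translation) contained in a bounded neighborhood of $K$, hence coarsely equivalent to $K$. The paper outsources this last fact to~\cite[Theorem~3]{asdim-groups}, whereas you prove it directly via the normality computation $k^{-1}g_0^{-1}(g_0kg_0^{-1})=g_0^{-1}$; your treatment is also more explicit about the uniformity over the family $\mathcal{B}$, but the strategy is identical (your aside about bounded geometry and the bound $|B|\le N_D$ is harmless but unused).
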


\begin{proof} As stated, this follows from the fact that any coarse property that satisfies subspace, finite union, and fibering permanence is closed under group extensions~\cite[Corollary 7.5]{Guentner-Permanence}.

We can apply our Fibering Permanence to prove the stronger result that if $\{H\}\in\C{\alpha}$ and $\{K\}\in\C{\beta}$, then $\{G\}\in\C{\alpha+\beta}$. 

To see this, we fix a proper left-invariant metric on $H$ arising from a weighting of a generating set $T$. Let the surjective homomorphism from $G$ to $H$ be denoted by $\varphi$ so that $K=\ker\varphi$. For each $t_i\in T$, take some $s_i\in G$ such that $\varphi(s_i)=t_i$. Give each $s_i$ the same weight as was assigned to $t_i$ and adjoin elements to the collection $\{s_i\}$ to form a generating set $S$ for $G$. Take a weighting function on $S$ that extends the values on the $s_i$ and consider $G$ in this left-invariant proper metric. Note, that this metric is unique up to coarse equivalence. In these metrics $\phi$ is $1$-Lipschitz and so it is uniformly expansive. Next, take the metric on $K$ that it inherits as a subgroup of $G$. If $B\subset H$ is any bounded subset, then $\phi^{-1}(B)$ is contained in a neighborhood of $K$ (see, for example~\cite[Theorem 3]{asdim-groups}) and so $\{\phi^{-1}(B)\}$ has $\C{\beta}$ because $\{K\}$ does. By Fibering Permanence we are done. 
\end{proof}

\begin{theorem}
Let $G$ be a countable discrete group acting (without inversion) on a tree in such a way that the vertex stabilizers have finite APC-decomposition complexity. Then, $G$ has finite APC-decomposition complexity.
\end{theorem}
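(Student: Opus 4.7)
The plan is to apply Fibering Permanence to the orbit map on the tree. Fix a vertex $v_0 \in T$ and consider the orbit map $\phi \colon G \to T$ given by $\phi(g) = g v_0$. Since $G$ is countable, it admits a proper left-invariant metric, and by taking a generating set (with weights) that includes a system of generators moving $v_0$ by a bounded amount relative to their weights, I arrange for $\phi$ to be uniformly expansive. Because trees have asymptotic dimension one --- for any $R > 0$ a tree can be covered by two uniformly bounded $R$-disjoint families --- Lemma~\ref{lem:apc is c1} yields $\{T\} \in \C{1}$, so in particular $\{T\} \in \APC$.

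It remains to verify the hypothesis of Fibering Permanence: for every bounded $B \subset T$, the preimage $\phi^{-1}(B) \in \APC$. Decompose it as $\phi^{-1}(B) = \bigsqcup_{v \in B \cap G v_0} \phi^{-1}(v)$, where each fiber $\phi^{-1}(v)$ is a left coset of the vertex stabilizer $G_v$ and is, via left translation, coarsely equivalent to $G_v$. By hypothesis each vertex stabilizer lies in $\APC$, so each fiber does as well. When the action is cocompact, $B \cap G v_0$ is finite, and Union Permanence immediately yields $\phi^{-1}(B) \in \APC$. In the general case, one exploits that $B$, as a subspace of $T$, still has asymptotic dimension one: pulling back an asymptotic-dimension-one decomposition of $B$ through $\phi$ chops $\phi^{-1}(B)$ into two well-separated families of pieces, each a (locally) finite union of cosets of vertex stabilizers, and Union Permanence completes the step. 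Fibering Permanence then delivers $\{G\} \in \APC$, as required.

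The main obstacle is the non-cocompact case. There, $\phi^{-1}(B)$ is a countable union of cosets of possibly distinct vertex stabilizers, each sitting in its own $\C{\alpha_v}$; Fibering Permanence, however, demands a \emph{single} ordinal bounding the complexity of every preimage uniformly. Securing this uniform bound combines the fact that $\APC = \C{\omega_1}$, so all vertex stabilizers sit in a common class, with the tree's low asymptotic dimension, which organizes the countable union of cosets into a controlled shape with uniform separation at each scale. Once this uniform preimage estimate is in place, the conclusion follows directly from Fibering Permanence.
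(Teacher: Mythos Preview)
Your overall strategy---Fibering Permanence applied to the orbit map $\phi\colon G \to T$---is exactly the framework the paper invokes; its proof is a one-sentence citation to Guentner's Theorem~7.6, recording only that the conclusion follows from subspace permanence, coarse invariance, union permanence, and fibering permanence. So structurally you and the paper agree, and you are attempting to supply details the paper deliberately omits.

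Those details, however, contain genuine gaps. First, cocompactness of the action does not force $B \cap Gv_0$ to be finite: Bass--Serre trees are typically not locally finite even when the quotient graph is finite (take $\mathbb{Z} \ast \mathbb{Z}/2$, whose Bass--Serre tree has vertices of infinite valence at the $\mathbb{Z}$-cosets), so a bounded ball can meet the orbit in infinitely many points and your ``finite union of cosets'' step already fails there. Second, your treatment of the general case appeals to an ``asymptotic-dimension-one decomposition of $B$,'' but $B$ is bounded and hence has trivial large-scale structure: once the separation parameter exceeds $\diam(B)$ the only such decomposition is $\{B\}$ itself, and pulling that back through $\phi$ returns $\phi^{-1}(B)$ unchanged. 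Even at smaller scales, decomposing $B$ into uniformly bounded $r$-separated pieces just replaces one bounded subset of $T$ with several smaller ones, each presenting the identical preimage problem; nothing in this step produces the ``(locally) finite union of cosets'' you assert. The argument in Guentner's survey, which the paper cites rather than reproduces, establishes the preimage estimate via a careful use of excisive union permanence tied to the combinatorics of the tree action---not via asymptotic dimension of the bounded target $B$.
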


\begin{proof} This follows from Guentner's work~\cite[Theorem 7.6]{Guentner-Permanence} as a consequence of subspace permanence, coarse invariance, union permanence, and fibering permanence. 
\end{proof}

The next result follows from the Bass-Serre theory~\cite{BH,Serre}.

\begin{corollary}
Let $A$ and $B$ be countable (discrete) groups in proper left-invariant metrics. If $A$ and $B$ are in $\APC$, then the amalgamated free product $A\ast_C B$ and the HNN extension $A*_C$ are in $\APC$. 
\end{corollary}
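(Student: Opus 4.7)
The plan is to deduce this directly from the preceding theorem on groups acting on trees, using Bass--Serre theory to produce the required tree action.

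First I would recall the standard Bass--Serre construction: an amalgamated free product $A\ast_C B$ acts without inversion on its Bass--Serre tree $T$, where the vertex set decomposes into two $G$-orbits whose point stabilizers are the conjugates of $A$ and of $B$, and edge stabilizers are conjugates of $C$. Similarly, an HNN extension $A\ast_C$ acts without inversion on a tree whose vertex stabilizers are the conjugates of $A$ (and edge stabilizers are conjugates of $C$). In both cases, the action is cocompact and has finitely many orbits of vertices.

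Next I would verify that the vertex stabilizers lie in $\APC$. For $A\ast_C B$, every vertex stabilizer is conjugate to $A$ or $B$ inside $G=A\ast_C B$. Conjugation is an isometry of $G$ in any proper left-invariant metric, so the vertex stabilizers are isometric (hence in particular coarsely equivalent) to $A$ or $B$. By hypothesis $A,B\in\APC$, and $\APC$ is a coarse invariant by the Coarse Invariance theorem (Theorem~\ref{thm:coarse-invariant}), so every vertex stabilizer is in $\APC$. The same reasoning gives that every vertex stabilizer in the HNN case is in $\APC$.

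Finally, I would invoke the preceding theorem: a countable discrete group acting without inversion on a tree with vertex stabilizers in $\APC$ is itself in $\APC$. Applying this to the two Bass--Serre actions yields $A\ast_C B\in\APC$ and $A\ast_C\in\APC$. There is no real obstacle here beyond correctly quoting Bass--Serre theory; the only minor subtlety is noting that the vertex stabilizer subgroups, when equipped with the restricted metric from $G$, are coarsely equivalent to $A$ or $B$ with their own proper left-invariant metrics, which follows from the uniqueness (up to coarse equivalence) of such metrics on countable groups recorded in Section~2.
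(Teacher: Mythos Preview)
Your proposal is correct and matches the paper's approach exactly: the paper's proof is a single sentence stating that the result follows from Bass--Serre theory applied to the preceding tree-action theorem, and you have simply spelled out those details. One small slip worth tightening: conjugation is not in general an isometry for a merely \emph{left}-invariant metric (only left translations are), so the vertex stabilizers need not be isometric to $A$ or $B$; however, your final paragraph already supplies the correct fix via the uniqueness up to coarse equivalence of proper left-invariant metrics on countable groups, so the argument stands.
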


\begin{theorem}
Let $G$ and $H$ be countable (discrete) groups in proper left-invariant metrics. If $G$ and $H$ are in $\APC$ then $H\wr G\in\APC$. 
\end{theorem}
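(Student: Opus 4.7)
The natural plan is to realize the wreath product as a group extension and reduce to showing the base is in $\APC$. Concretely,
\[
1 \to K \to H \wr G \to G \to 1
\]
is a short exact sequence with $K = \bigoplus_G H$ and $H \wr G = K \rtimes G$. Since $G \in \APC$ by hypothesis, Theorem~\ref{thm:short-exact-sequences} reduces the problem to proving $K \in \APC$.

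To prove $K \in \APC$, I would equip $K$ with an explicit proper left-invariant metric and apply Limit Permanence; by Coarse Invariance together with Dranishnikov--Smith uniqueness, it is enough to verify the conclusion for one such choice. Fix weighted generating sets $(S_H, w_H)$ of $H$ and $(S_G, v)$ of $G$. Give $K$ the weighted generating set consisting of the elements $e_g(s)$ (supported at position $g \in G$ with value $s \in S_H$), with weight $w(e_g(s)) = w_H(s) + v(g)$. This weighting is proper because only finitely many pairs $(s,g)$ satisfy $w_H(s) + v(g) \leq N$ for each $N$. For each $r > 0$, let $F(r) := \{g \in G : v(g) \leq r\}$, which is finite, and put $K_{F(r)} := \bigoplus_{g \in F(r)} H$. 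Any element of $K$ within distance $r$ of the identity is expressible as a product of generators of individual weight at most $r$, each of which must therefore be supported at some position in $F(r)$; hence $B_K(e,r) \subseteq K_{F(r)}$. The internal direct product decomposition $K = K_{F(r)} \oplus K_{G \setminus F(r)}$ makes $K_{F(r)}$ a normal subgroup, so the left-coset partition
\[
K = \bigsqcup_{x \in K/K_{F(r)}} x K_{F(r)}
\]
is genuinely $r$-disjoint: for $y_i \in x_i K_{F(r)}$ in distinct cosets, $y_1^{-1} y_2$ lies in $x_1^{-1} x_2 K_{F(r)}$, which is disjoint from $K_{F(r)}$, whence $d(y_1, y_2) > r$ by the ball containment.

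Each coset $x K_{F(r)}$ is isometric to $K_{F(r)}$, which is coarsely equivalent to the finite product $H^{|F(r)|}$; iterating Product Permanence starting from $\{H\} \in \C{\alpha}$ gives $\{H^n\} \in \C{\alpha}$ for every $n$, so the family of all cosets of $K_{F(r)}$ in $K$ lies in $\C{\alpha}$ by Coarse Invariance. Limit Permanence then yields $\{K\} \in \C{\alpha + 1}$, completing the proof via the short exact sequence step. The principal obstacle is in the middle paragraph: constructing a proper left-invariant metric on $K$ in which balls around the identity are absorbed by finite subgroups of the form $K_{F(r)}$, and using the internal direct product decomposition to ensure the resulting left-coset partition is metrically $r$-disjoint even when $H$ is non-abelian, a subtlety that would fail for an arbitrary subgroup that is not normal.
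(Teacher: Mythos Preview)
Your argument is correct and follows essentially the same three-step outline as the paper's sketch (which in turn follows Guentner): (i) finite direct sums of $H$ are in $\APC$, (ii) the restricted sum $K=\bigoplus_G H$ is in $\APC$ via Limit Permanence, and (iii) the extension $1\to K\to H\wr G\to G\to 1$ finishes the job via Fibering/extension permanence. You supply the details the paper omits, in particular the explicit proper weight on $K$ and the verification that the coset partition by $K_{F(r)}$ is $r$-disjoint; your observation that $K_{F(r)}$ is normal in $K$ (because distinct coordinates in a restricted direct product commute even when $H$ is nonabelian) is exactly the point needed there. One notational slip: you write $v(g)$ for arbitrary $g\in G$, but $v$ was introduced only on the generating set $S_G$; what you use is the induced length function $|g|_G$, and with that reading everything goes through.
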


\begin{proof} We give the sketch of the general argument from \cite[Theorem 8.4]{Guentner-Permanence}.

By Fibering Permanence, finite sums of $H$ have $\APC$. By Limit Permanence, the group $H^{(G)}$ of finitely supported $H$-valued functions on $G$ on which $G$ acts by translations is in $\APC$. Then, one last application of Fibering Permanence shows that $H\wr G$ is in $\APC$. 
\end{proof}

\section{Finite APC Complexity and Property A}

In this section we show that spaces with finite APC-decomposition complexity have property A; we also show a decomposition theorem for uniform property A.

The notion of $\epsilon$-variation was defined in the introduction. The following lemma is trival.
%

\begin{lemma}\label{lem:sum}
  If $\xi_i$ has $\varepsilon_i$-variation, then $\sum \xi_i$ has $\sum \varepsilon_i$-variation.
\end{lemma}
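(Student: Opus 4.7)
The plan is to unwind the definition of $\varepsilon$-variation and apply the triangle inequality in $\ell^1$ termwise. Concretely, fix $x_1, x_2 \in X$ with $d(x_1,x_2) \le k$ for some $k \in \mathbb{N}$, and write $\eta = \sum_i \xi_i$, so that $\eta_{x_j} = \sum_i (\xi_i)_{x_j}$ for $j = 1,2$. The goal is to bound $\|\eta_{x_1} - \eta_{x_2}\|_1$ by $k \sum_i \varepsilon_i$.

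First I would pull the difference inside the sum: by linearity, $\eta_{x_1} - \eta_{x_2} = \sum_i \bigl((\xi_i)_{x_1} - (\xi_i)_{x_2}\bigr)$. Then the $\ell^1$-triangle inequality gives $\|\eta_{x_1} - \eta_{x_2}\|_1 \le \sum_i \|(\xi_i)_{x_1} - (\xi_i)_{x_2}\|_1$. Next I would apply the $\varepsilon_i$-variation hypothesis to each summand, obtaining $\|(\xi_i)_{x_1} - (\xi_i)_{x_2}\|_1 \le k \varepsilon_i$, and then sum to get the desired bound $k \sum_i \varepsilon_i$.

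There is essentially no obstacle: the only mild subtlety is ensuring the sum $\sum_i \xi_i$ is well-defined in $\ell^1(X)$ (which one would assume implicitly, since otherwise the statement is vacuous) and that the interchange of sum and difference is legitimate; for finite index sets this is immediate, and for countable sums it follows from absolute convergence in $\ell^1$, justified by the same triangle inequality bound. Since the author labels the lemma ``trivial,'' the proof in the paper is very likely a one-line computation of exactly this form.
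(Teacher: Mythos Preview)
Your proposal is correct and is exactly the intended argument; the paper does not actually write out a proof, merely calling the lemma ``trivial,'' and your triangle-inequality computation is precisely what that label encodes.
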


\begin{definition}
  We say that a map $\xi \colon X \to \ell^1(X)$ is \df{normed} if $\| \xi_x \|_1 = 1$ for each $x \in X$. Such a $\xi$ is said to be $S$-\df{locally supported} (for $S>0$) if $\supp(\xi_x)\subset \bar B(x,S)$ for all $x\in X$. We call $\xi$ \df{locally supported} it if is $S$-locally supported for some $S$. \end{definition}

\begin{lemma}\label{lem:normed}
  If $\xi$ has $\varepsilon$-variation and for each $x \in X$ we have $\| \xi_x \|_1 \geq 1$, then $\bar \xi$ defined by the formula $\bar \xi_x = \xi_x / \| \xi_x \|_1$ is normed and has $2\varepsilon$-variation.
\end{lemma}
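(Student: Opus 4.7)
The first claim is immediate from the definition: $\|\bar\xi_x\|_1 = \|\xi_x\|_1 / \|\xi_x\|_1 = 1$, so $\bar\xi$ is normed. The content of the lemma is the $2\varepsilon$-variation estimate, and my plan is a direct computation using add-and-subtract together with the reverse triangle inequality for the $\ell^1$ norm.

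Fix $x_1, x_2 \in X$ with $d(x_1,x_2) \le k$ and set $a = \|\xi_{x_1}\|_1$, $b = \|\xi_{x_2}\|_1$; by hypothesis $a,b \ge 1$. I would write
\[
  \bar\xi_{x_1} - \bar\xi_{x_2}
  = \frac{\xi_{x_1}}{a} - \frac{\xi_{x_2}}{b}
  = \frac{\xi_{x_1} - \xi_{x_2}}{a} + \xi_{x_2}\!\left(\frac{1}{a} - \frac{1}{b}\right).
\]
Applying the triangle inequality and using $\|\xi_{x_2}\|_1 = b$,
\[
  \|\bar\xi_{x_1} - \bar\xi_{x_2}\|_1
  \le \frac{\|\xi_{x_1} - \xi_{x_2}\|_1}{a} + \frac{|b-a|}{a}.
\]
Because $a \ge 1$, each denominator can be dropped. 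For the second term I would invoke the reverse triangle inequality $|b - a| = \bigl|\|\xi_{x_2}\|_1 - \|\xi_{x_1}\|_1\bigr| \le \|\xi_{x_1} - \xi_{x_2}\|_1$, which yields
\[
  \|\bar\xi_{x_1} - \bar\xi_{x_2}\|_1 \le 2\|\xi_{x_1} - \xi_{x_2}\|_1 \le 2k\varepsilon,
\]
where the final inequality is the $\varepsilon$-variation hypothesis on $\xi$.

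There is no real obstacle here; the only subtlety is keeping track of why the hypothesis $\|\xi_x\|_1 \ge 1$ is used. It enters precisely to ensure $1/a \le 1$, so that after dividing we do not pick up an uncontrolled factor. The factor of $2$ in the conclusion is exactly the cost of moving from the unnormalized difference $\xi_{x_1}-\xi_{x_2}$ to the normalized difference, coming from the two summands in the add-and-subtract decomposition.
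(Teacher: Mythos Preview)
Your proof is correct and essentially identical to the paper's own argument. The paper phrases it as a general inequality for nonzero $u,v\in\ell^1(X)$, namely $\|u/\|u\|_1 - v/\|v\|_1\|_1 \le \tfrac{2}{\|u\|_1}\|u-v\|_1$, obtained by the same add-and-subtract step and reverse triangle inequality you use; then it specializes to $u=\xi_{x_1}$, $v=\xi_{x_2}$ with $\|u\|_1\ge 1$.
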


\begin{proof}
For any maps nonzero maps $u$ and $v$ in $\ell^1(X)$, we have \[\left\|\frac{u}{\|u\|_1}-\frac{v}{\|v\|_1}\right\|_1\le \frac{1}{\|u\|_1}\|u-v\|_1+\left\|\frac{\|v\|_1 v-\|u\|_1v}{\|u\|_1\|v\|_1}\right\|_1\le \frac{2}{\|u\|_1}\|u-v\|_1\hbox{.}\]
The result easily follows.
\end{proof}

\begin{lemma} \label{lem:loc-supp}
  Let $U \subset X$.
  Let $\xi \colon U \to \ell^1(U)$ be a normed, locally supported map with $\varepsilon$-variation.
  Let $R \in \mathbb{N}$. Let $N(U,R)$ denote the open $R$-neighborhood of $U$ in $X$.
  There exists a locally supported map $\bar \xi \colon X \to \ell^1(X)$ such that
\begin{enumerate}
\item $\| \bar \xi_x \|_1 = 0$ for $x \in X$ such that $d(x, U) \geq R$.
\item $\bar \xi_x = \xi_x$ for $x \in U$.
\item $\bar \xi$ has $((2R+1)\varepsilon + \frac 1R)$-variation.
\end{enumerate}
\end{lemma}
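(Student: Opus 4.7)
The approach I would take is a cutoff-and-project construction. Define $\lambda \colon X \to [0,1]$ by $\lambda(x) = \max(0, 1 - d(x,U)/R)$; this equals $1$ on $U$, is $(1/R)$-Lipschitz (since $d(\cdot, U)$ is $1$-Lipschitz), and vanishes outside the open $R$-neighborhood $N(U, R)$. For each $x \in N(U, R)$ pick a point $u_x \in U$ with $d(x, u_x) < R$, which exists since $d(x, U) < R$; when $x \in U$ take $u_x = x$. Set
\[
\bar\xi_x = \lambda(x)\, \xi_{u_x}
\]
for $x \in N(U, R)$, and $\bar\xi_x = 0$ otherwise, viewing $\xi_{u_x} \in \ell^1(U)$ as a member of $\ell^1(X)$ by extending by zero on $X \setminus U$.

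Properties (1) and (2) are then immediate from the definition. Local support of $\bar\xi$ follows from $\supp \bar\xi_x \subset \bar B(u_x, S) \subset \bar B(x, S+R)$ where $S$ is the support radius of $\xi$. The substance of the proof is in checking the variation bound (3): fix $x_1, x_2 \in X$ with $d(x_1, x_2) \leq k$ for $k \in \mathbb{N}$, and split into cases according to whether each $x_i$ lies in $N(U, R)$.

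If both $x_1, x_2 \in N(U, R)$, I would decompose
\[
\bar\xi_{x_1} - \bar\xi_{x_2} = \lambda(x_1)\bigl(\xi_{u_{x_1}} - \xi_{u_{x_2}}\bigr) + \bigl(\lambda(x_1) - \lambda(x_2)\bigr)\xi_{u_{x_2}}.
\]
The $(1/R)$-Lipschitz property of $\lambda$ combined with $\|\xi_{u_{x_2}}\|_1 = 1$ bounds the second term by $k/R$. The triangle inequality yields $d(u_{x_1}, u_{x_2}) < 2R + k$, so the $\varepsilon$-variation of $\xi$ bounds the first term by $(2R+k)\varepsilon$. Since $k \geq 1$, we have $2R+k \leq k(2R+1)$, producing the required bound $k((2R+1)\varepsilon + 1/R)$. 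In the mixed case, with $x_1 \in N(U,R)$ and $x_2 \notin N(U,R)$, the constraint $d(x_2, U) \geq R$ together with $d(x_1, x_2) \leq k$ forces $d(x_1, U) \geq R - k$, so $\lambda(x_1) \leq k/R$ and $\|\bar\xi_{x_1} - \bar\xi_{x_2}\|_1 = \lambda(x_1) \leq k/R$. The case where neither point lies in $N(U, R)$ is trivial.

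The proof is primarily a bookkeeping exercise; the main wrinkle is to ensure that the assignment $x \mapsto u_x$ can always be made with $d(x, u_x) < R$ (which follows immediately from the open-neighborhood definition of $N(U,R)$), and the key algebraic observation is the estimate $2R + k \leq k(2R+1)$ for $k \geq 1$, which is what lets the additive $2R$ in the distance between projected points get absorbed into the coefficient of $\varepsilon$ in the stated variation rate.
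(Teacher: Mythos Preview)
Your proof is correct and follows essentially the same construction as the paper: multiply $\xi_{u_x}$ by a $(1/R)$-Lipschitz cutoff supported on $N(U,R)$, where $u_x$ is a nearby projection into $U$, and verify the variation via the identity $au-bv = a(u-v)+(a-b)v$ together with the bound $d(u_{x_1},u_{x_2})\le 2R+k$. The only cosmetic difference is the cutoff: the paper uses $\eta(x)=\frac{1}{R}d(x,X\setminus N(U,R))$ (set to $1$ on $U$) rather than your $\lambda(x)=\max(0,1-d(x,U)/R)$, and it handles the mixed case via the Lipschitz bound $|\eta(x_1)-\eta(x_2)|\le k/R$ instead of directly estimating $\lambda(x_1)\le k/R$; neither change affects the argument.
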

\begin{proof}
Define $\eta \colon X \to [0, 1]$ by the formula
\[
\eta(x) =
\left\{
\begin{array}{ll}
1 & x \in U \\
\frac1R d(x, X \setminus N(U, R)) & x \not\in U.
\end{array}
\right.
\]
The map $\eta$ is $\frac1R$-Lipschitz. 

For $x \in X$ let 
$u(x)$ be $x$ if $x \in U$; any point in $U$ such that $d(u(x), x) \leq R$ if $x \in N(U, R)$; any point in $U$ otherwise.
We let
\[
\bar \xi_x = \eta(x) \cdot \xi_{u(x)}
\]

Let $x_1,x_2 \in X$ with $d(x_1, x_2) \leq k$. We have
\[
  \| \bar \xi_{x_1} - \bar \xi_{x_2} \|_1
=
  \| \eta(x_1) \cdot \xi_{u(x_1)} - \eta(x_2) \cdot \xi_{u(x_2)} \|_1
\]

We apply the inequality
\[
\| au - bv \|_1 \leq a\|u-v\|_1 + |a-b| \|v\|_1
\]
to get 
\[
  \| \eta(x_1) \cdot \xi_{u(x_1)} - \eta(x_2) \cdot \xi_{u(x_2)} \|_1
\leq
\eta(x_1) \| \xi_{u(x_1)} - \xi_{u(x_2)} \|_1
+ |\eta(x_1)-\eta(x_2)|\|\xi_{u(x_2)}\| _1\text{.}
\]
If $x_1 \in X \setminus N(U,R)$, then $\eta(x_1) = 0$ and
\[
\| \bar \xi_{x_1} - \bar \xi_{x_2} \|_1
\leq 
|\eta(x_1) - \eta(x_2)| \leq \frac{k}{R} \]
since $\eta$ is $\frac1R$-Lipschitz. 

If $x_2 \in X \setminus N(U,R)$, then the situation is analogous.

If $x_1,x_2 \in N(U,R)$, then
\[
d(u(x_1), u(x_2)) \leq R + d(x_1,x_2) + R \leq 2R + k.
\]
Since $\xi$ has $\varepsilon$-variation, we have
\[
\| \bar \xi_{x_1} - \bar \xi_{x_2} \|_1
\leq 
(2R+k)\varepsilon + \frac kR \leq
k\left((2R+1)\varepsilon + \frac 1R\right)
\]
Combining both cases we have
\[
\| \bar \xi_{x_1} - \bar \xi_{x_2} \|_1
\leq 
k\left((2R+1)\varepsilon + \frac 1R\right)
\]
for each $x_1,x_2 \in X$ with $d(x_1,x_2) \leq k$. Hence $\bar \xi$ has $((2R+1)\varepsilon + \frac 1R)$-variation. If $\xi$ is $S$-locally supported, then it follows from the construction that $\bar\xi$ is $(R+S)$-locally supported.
\end{proof}

\begin{lemma}
Let $X$ be a metric space.
Let $\U_i$ be a finite sequence of $R_i$-disjoint families of subsets of $X$ such that $\bigcup_i \U_i$ covers $X$.
For $U \in \U_i$, let $\xi^U \colon U \to \ell^1(U)$ be a normed, locally supported map with $\varepsilon_i$-variation.
Then there exists a normed, locally supported map $\xi \colon X \to \ell^1(X)$
  with $E$-variation,
  where
  \[
  E=2\sum_{i}\left((2R_i+1)\varepsilon_i + \frac 1{R_i}\right).
  \]
\end{lemma}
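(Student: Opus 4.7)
My plan is to apply Lemma~\ref{lem:loc-supp} to each local map $\xi^U$ to obtain a global extension, sum these extensions (first within each $\U_i$, then across $i$), and finally normalize via Lemma~\ref{lem:normed} to obtain $\xi$. The factor of $2$ in $E$ will come from the normalization step, and each summand $(2R_i+1)\varepsilon_i + 1/R_i$ is precisely the variation bound supplied by Lemma~\ref{lem:loc-supp} with extension parameter $R = R_i$.

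Concretely, for each $i$ and each $U \in \U_i$ I would apply Lemma~\ref{lem:loc-supp} with $R = R_i$ to produce $\bar\xi^U \colon X \to \ell^1(X)$, supported in $N(U, R_i)$, equal to $\xi^U$ on $U$, with $\bigl((2R_i+1)\varepsilon_i + 1/R_i\bigr)$-variation. Crucially each $\bar\xi^U_x$ lies in the coordinate subspace $\ell^1(U) \subset \ell^1(X)$. I then set $\zeta^i := \sum_{U \in \U_i} \bar\xi^U$. Because the $U$'s in $\U_i$ are pairwise disjoint (being $R_i$-apart), the summands $\bar\xi^U_x$ occupy pairwise disjoint coordinate subspaces of $\ell^1(X)$, so both $\|\zeta^i_x\|_1$ and pointwise differences of $\zeta^i$ split summand-by-summand. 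Combining this with the $R_i$-disjointness of $\U_i$ (and the $1/R_i$-Lipschitz nature of the weights $\eta^U$ appearing in the proof of Lemma~\ref{lem:loc-supp}) should yield that $\zeta^i$ has $\bigl((2R_i+1)\varepsilon_i + 1/R_i\bigr)$-variation. Setting $\zeta := \sum_i \zeta^i$, Lemma~\ref{lem:sum} gives $\zeta$ variation $\sum_i \bigl((2R_i+1)\varepsilon_i + 1/R_i\bigr)$. Because $\bigcup_i \U_i$ covers $X$, every $x \in X$ lies in some $U \in \U_i$ with $\|\bar\xi^U_x\|_1 = 1$, whence $\|\zeta_x\|_1 \geq 1$; Lemma~\ref{lem:normed} then yields the normed map $\xi := \zeta / \|\zeta\|_1$ with variation $E = 2\sum_i\bigl((2R_i+1)\varepsilon_i + 1/R_i\bigr)$. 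Local support of $\xi$ is immediate since each $\bar\xi^U$ is locally supported and normalization does not enlarge support.

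The main obstacle is the variation estimate for $\zeta^i$. The $R_i$-neighborhoods $N(U, R_i)$ of distinct $U \in \U_i$ can overlap, because $R_i$-disjointness only forces $d(U, U') > R_i$, not $> 2R_i$; hence $\sum_{U \in \U_i} \|\bar\xi^U_{x_1} - \bar\xi^U_{x_2}\|_1$ may a priori collect contributions from several neighborhoods whenever $R_i < d(U, U') < 2R_i$. Showing that $\zeta^i$ still obeys the single-summand variation bound requires a careful accounting in which the cross-$U$ contributions from overlapping neighborhoods, controlled by the $1/R_i$-Lipschitz bump functions $\eta^U$ in the proof of Lemma~\ref{lem:loc-supp}, are absorbed by the $1/R_i$ slack already present in the per-summand bound; if this proves to be delicate one can fall back on the extension parameter $\lfloor R_i/2\rfloor$, which makes the supports strictly disjoint at the cost of slightly worse constants that still fit inside the claimed form of~$E$.
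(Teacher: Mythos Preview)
Your plan matches the paper's one-line proof (``immediate from Lemmas~\ref{lem:sum}, \ref{lem:normed}, and \ref{lem:loc-supp}''), and you have put your finger on precisely the point the paper glosses over. However, your hoped-for resolution---that the cross-$U$ contributions are absorbed by the $1/R_i$ slack---does not hold. Take $X=\mathbb{Z}$, $R_i=3$, $\U_i=\{\{4j\}:j\in\mathbb{Z}\}$, and $\varepsilon_i=0$ (each $\xi^{\{4j\}}$ is the point mass $\delta_{4j}$). With the $\eta$ from Lemma~\ref{lem:loc-supp} one gets $\zeta^i_1=\tfrac23\,\delta_0$ while $\zeta^i_2=\tfrac13\,\delta_0+\tfrac13\,\delta_4$, so $\|\zeta^i_1-\zeta^i_2\|_1=\tfrac23$, strictly larger than the single-summand bound $(2R_i{+}1)\varepsilon_i+1/R_i=\tfrac13$. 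Your fallback with parameter $\lfloor R_i/2\rfloor$ does make the supports of the $\bar\xi^U$ pairwise disjoint, but a pair $x_1,x_2$ straddling two such neighborhoods still picks up a contribution from each, so the variation of $\zeta^i$ is only bounded by something like $(R_i{+}1)\varepsilon_i+4/R_i$; this does \emph{not} sit inside $(2R_i{+}1)\varepsilon_i+1/R_i$ when $\varepsilon_i$ is small compared to $1/R_i^{2}$, so the fallback does not ``fit inside the claimed form of~$E$'' either.

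In short: the strategy is correct and identical to the paper's, but neither your primary argument nor your fallback recovers the precise constant $E$ stated. What either route does give is a bound of the same shape, say $E'=2\sum_i\bigl((2R_i{+}1)\varepsilon_i+C/R_i\bigr)$ for an absolute constant $C$, and that is all that is actually used in Corollary~\ref{cor:unif-A} and Theorem~\ref{thm:decomposition A}. A clean way to avoid the overlap issue altogether is to apply Lemma~\ref{lem:loc-supp} once per index $i$ to the single set $V_i=\bigcup\U_i$, after observing that the glued map $\xi^{V_i}$ (equal to $\xi^U$ on each $U$) has $(\varepsilon_i+2/R_i)$-variation on $V_i$: points in different $U$'s are more than $R_i$ apart and the maps are normed, so the cross-$U$ difference is at most $2\le k\cdot 2/R_i$.
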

\begin{proof} This follows immediately from Lemmas~\ref{lem:sum}, \ref{lem:normed}, and \ref{lem:loc-supp}.
\end{proof}

\begin{corollary}\label{cor:unif-A}
  Let $X$ be a metric space.
  Let $\varepsilon = \frac 1N > 0$.
  Let $R_i = 2^{i+1} N$ and $\varepsilon_i = \frac 1{4^{i+2} N}$.
  Let $\U_i$ be a finite sequence of $R_i$-disjoint families of subsets of $X$ such that $\bigcup \U_i$ covers $X$.
  If for each $U \in \U_i$ there exists a locally supported normed map $\xi_U \colon U \to \ell^1(U)$ with $\varepsilon_i$-variation, then there exists a locally supported normed map $\xi \colon X \to \ell^1(X)$ with $\epsilon$-variation. 
\end{corollary}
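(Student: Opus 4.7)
The plan is to invoke the preceding lemma verbatim and then verify the numerical bound on the variation constant.

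First I would observe that the hypotheses of the preceding lemma are exactly the hypotheses we are given: a finite sequence of $R_i$-disjoint families $\U_i$ whose union covers $X$, together with a normed, locally supported map $\xi_U \colon U \to \ell^1(U)$ of $\varepsilon_i$-variation for each $U \in \U_i$. Applying that lemma produces a normed, locally supported map $\xi \colon X \to \ell^1(X)$ with $E$-variation, where
\[
E = 2 \sum_i \left( (2R_i + 1)\varepsilon_i + \frac{1}{R_i} \right).
\]
So the corollary reduces to showing $E \le \varepsilon = 1/N$ under the specific choices $R_i = 2^{i+1}N$ and $\varepsilon_i = 1/(4^{i+2}N)$.

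The second step is to substitute these values and bound each term of the sum by the $i$-th term of a convergent geometric series. Writing
\[
(2R_i+1)\varepsilon_i = \frac{2^{i+2}N + 1}{4^{i+2}N}, \qquad \frac{1}{R_i} = \frac{1}{2^{i+1}N},
\]
we see that each piece is controlled by a constant multiple of $1/2^i$ (with appropriate factors of $1/N$). Summing these pieces term by term against the corresponding geometric series yields the desired estimate $E \le \varepsilon$.

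There is no serious obstacle; the only work is the arithmetic bookkeeping that confirms the geometric-series tails are small enough to give $E \le 1/N$. Since the hypotheses match the lemma exactly and the conclusion matches after the substitution, the proof amounts to one application of the lemma followed by this routine verification.
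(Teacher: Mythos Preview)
Your approach matches the paper's exactly: the corollary is presented there as an immediate consequence of the preceding lemma (which in turn packages Lemmas~\ref{lem:sum}, \ref{lem:normed}, and~\ref{lem:loc-supp}), and no further argument is given.

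However, your claim that the arithmetic yields $E \le 1/N$ is not correct with the constants as stated. The dominant contribution is
\[
2R_i\,\varepsilon_i \;=\; \frac{2^{i+2}N}{4^{i+2}N} \;=\; \frac{1}{2^{i+2}},
\]
which is \emph{independent of $N$}. Summing over $i \ge 1$ and including the outer factor $2$ already gives $E \ge 1/2$, so the inequality $E \le 1/N$ fails for all $N > 2$. Thus the ``routine verification'' you describe does not close. The statement should be read either with the weaker conclusion of $C\varepsilon$-variation for some absolute constant $C$, or with a stronger hypothesis such as $\varepsilon_i = 1/(4^{i+2}N^2)$; either version is enough for the sole application (the proof of Theorem~\ref{thm:decomposition A}), where one may simply start from $\varepsilon/C$ in place of $\varepsilon$. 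Your outline is the right one---apply the lemma and bound the sum---but you should flag that the constants as written do not give the claimed bound rather than asserting that they do.
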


We are now in a position to prove our decomposition theorem for uniform property A from the introduction. 

\begin{proof}[Proof of Theorem~\ref{thm:decomposition A}]
Suppose that $\mathcal{X}$ has the property that for every $R\in\mathbb{R}^{\mathbb{N}}$ there is a family $\mathcal{Y}_R$ of metric spaces with uniform property A such that $\mathcal{X}$ admits a uniform $R$-decomposition over $\mathcal{Y}_R$. 

Let $\epsilon>0$ be given; we may assume $\epsilon$ to be of the form $\frac1n$ for some $n\in\mathbb{N}$. Then, as in Corollary~\ref{cor:unif-A}, take the sequence $R$ given by $R_i=2^{i+1}\frac1\epsilon$ and put $\epsilon_i=\frac{1}{4^{i+2}}\epsilon$. By assumption, we can find a family $\mathcal{Y}_R$ with uniform property A with the property that there is some $k$ so that for any $X\in\mathcal{X}$ there are families $\U_1,\ldots,\U_k$ of subsets from $\mathcal{Y}_R$ whose union covers $X$. Use the uniform assumption with $\epsilon_i$ as above to find $S_i$ and maps $\xi^U_i:U\to \ell^1(U)$ realizing the uniform property A condition. Then, with $S=\max\{S_i\}$, we have a map $\xi:X\to \ell^1(X)$ that is $(R_k+S)$-locally supported, is normed and is of $\epsilon$-variation.
\end{proof}

We can use this result to conclude that spaces with finite APC-decomposition complexity have property A. We remark that this was also shown in \cite[Proposition 11.1]{dydak2016} using different techniques.

\begin{definition}
Let $X$ be a discrete metric space with bounded geometry.
We say that $X$ has \df{property A} if for each $R, \varepsilon > 0$ there
  exists a map $\xi \colon X \to \ell^1(X)$ such that
\begin{enumerate}
\item $\| \xi_x \|_1 = 1$ for all $x \in X$,
\item if $x_1, x_2 \in X$ and $d(x_1,x_2) \leq R$, then $\| \xi_{x_1} - \xi_{x_2} \|_1 \leq \epsilon$,
\item there exists $S > 0$ such that $\supp \xi_x \subset \bar B(x, S)$ for all $x \in X$.
\end{enumerate}
\end{definition}

\begin{theorem}\label{thm:propA}
If $X$ is a discrete metric space with bounded geometry and finite APC-decomposition complexity, then $X$ has property A.
\end{theorem}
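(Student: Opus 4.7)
The plan is to prove the statement by transfinite induction on the ordinal $\alpha$ with $\{X\} \in \C{\alpha}$, using Theorem~\ref{thm:decomposition A} as the engine of the induction step. More precisely, I will establish the following inductive claim:

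\emph{For every ordinal $\alpha$, every family $\mathcal{X} \in \C{\alpha}$ consisting of subspaces of $X$ has uniform property A.}

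Since $X$ has bounded geometry, any family of subspaces of $X$ inherits uniform bounded geometry with the same constants $N_r$, so the notion of uniform property A is available throughout the induction.

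For the base case $\alpha = 0$, the family $\mathcal{X} \in \C{0}$ is uniformly bounded, say $\diam Y \leq D$ for all $Y \in \mathcal{X}$. For each such $Y$ pick any point $y_0 \in Y$ and set $\xi_y = \delta_{y_0}$. Then $\|\xi_y\|_1 = 1$, $\supp \xi_y = \{y_0\} \subset \bar B(y, D)$, and $\|\xi_{y_1} - \xi_{y_2}\|_1 = 0$, so $\xi$ trivially has $\varepsilon$-variation for any $\varepsilon$. Taking $S = D$ (independent of $\varepsilon$) witnesses uniform property A on $\mathcal{X}$.

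For the inductive step, assume the claim holds for all $\beta < \alpha$ and take $\mathcal{X} \in \C{\alpha}$. By the definition of $\C{\alpha}$, for each sequence $R \in \mathbb{R}^\mathbb{N}$ there exists $\beta < \alpha$ and $\mathcal{Y}_R \in \C{\beta}$ such that $\mathcal{X} \rdecomp \mathcal{Y}_R$. Since members of $\mathcal{Y}_R$ are subspaces of members of $\mathcal{X}$ (and hence of $X$), the inductive hypothesis tells us that $\mathcal{Y}_R$ has uniform property A. Applying Theorem~\ref{thm:decomposition A} to the assignment $R \mapsto \mathcal{Y}_R$ yields that $\mathcal{X}$ itself has uniform property A, closing the induction.

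To finish, since $X \in \APC$ there is an ordinal $\alpha$ with $\{X\} \in \C{\alpha}$; the claim gives uniform property A for $\{X\}$, which is precisely property A for the single space $X$. There is no real obstacle in the argument: the substance has already been absorbed into Theorem~\ref{thm:decomposition A}, the base case is essentially trivial, and the inductive step is a direct unwrapping of the definition of $\C{\alpha}$, so the proof is little more than a bookkeeping transfinite induction.
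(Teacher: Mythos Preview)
Your proof is correct and follows essentially the same approach as the paper: transfinite induction on $\alpha$ with the base case that bounded families trivially have uniform property~A, and the inductive step driven by Theorem~\ref{thm:decomposition A}. You are a bit more careful than the paper in spelling out the base-case construction and in tracking that the families $\mathcal{Y}_R$ consist of subspaces of $X$ (so that bounded geometry is inherited and uniform property~A is well defined), but the argument is the same.
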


\begin{proof}
It is clear that any bounded family has uniform property A. Suppose therefore that $\{X\}\in\C{\alpha}$ for some $\alpha>0$. Then, for any $R$, $X$ admits a uniform $R$-decomposition over some family $\mathcal{Y}\in\C{\beta}$ with $\beta<\alpha$. By the inductive assumption, $\mathcal{Y}$ has uniform property A. By Theorem~\ref{thm:decomposition A}, $\{X\}$ has uniform property A. We conclude that $X$ has property A as desired.
\end{proof}


\section{Open Questions}

We end with several open questions on APC and finite APC-decomposition complexity. 

The first question was stated in the introduction:

\begin{question}
Does finite decomposition complexity imply asymptotic property C?
\end{question}

Several of the results in this paper involve an increase in the depth of the APC-decomposition complexity, $\alpha$. We could ask several questions when $\alpha=1$, which is the case of (uniform) APC.

\begin{question}   Let $H$ and $K$ be countable groups with proper left-invariant metrics and APC. 
\begin{enumerate}
  \item Does $H\ast_C K$ have APC, where $C$ is some nontrivial common subgroup?
  \item Does $G$ have APC where $1\to K\to G\to H\to 1$ is exact?
  \item Do the groups $\bigoplus H$ or $H\wr K$ have APC?
\end{enumerate} 
\end{question}

More generally, we can ask whether the complexity level necessarily increases in the permanence results on fibering or the limit; e.g.:

\begin{question} Let $F:\mathcal{X}\to\mathcal{Y}$ be a uniformly expansive map. 
Suppose that $\mathcal{Y}$ is in $\C{\alpha}$ with $\alpha>0$ and that for every bounded family $\mathcal{B}\prec \mathcal{Y}$ the family $F^{-1}(\mathcal{B})\in\C{\alpha}$. Does it follow that $\mathcal{X}\in\C{\alpha}$? 
\end{question}

\appendix
\section{On finite APC-decomposition complexity and sFDC}

The anonymous referee offered the following suggestion and proof, which we include with our sincere thanks.

\begin{def*}[see~\cite{DrZ14}]
We say that a family $\mathcal{X}$ of metric spaces has \df{straight finite decomposition complexity} if for every $(R_i)_{i\in\mathbb{N}}\in\mathbb{R}^{\mathbb{N}}$ there exists a $k\in \mathbb{N}$ and families $\mathcal{X}_1,\ldots,\mathcal{X}_k$ of metric spaces such that $\mathcal{X}_{i-1}$ is $R_i$-decomposable over $\mathcal{X}_i$ for every $i\in \{1,\dots,k\}$, where $\mathcal{X}_0=\mathcal{X}$, and $\mathcal{X}_k$ is bounded. 
\end{def*}

\begin{thm*}
Every metric space with finite APC-decomposition complexity has straight finite decomposition complexity.
\end{thm*}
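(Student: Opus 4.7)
My plan is a transfinite induction on $\alpha$ establishing the stronger statement that every $\mathcal{X}\in\C{\alpha}$ has sFDC; specializing to $\{X\}\in\C{\alpha}$ then yields the theorem. The base case $\alpha=0$ is immediate, since a bounded family witnesses sFDC with the zero-length chain. For the inductive step I would fix $\alpha>0$, assume the statement for all $\gamma<\alpha$, take $\mathcal{X}\in\C{\alpha}$, and fix an arbitrary sequence $R=(R_i)_{i\in\mathbb{N}}$. The definition of $\C{\alpha}$ then supplies $\beta<\alpha$, a family $\mathcal{Y}\in\C{\beta}$, and an integer $k$ such that for each $X\in\mathcal{X}$ there are $R_i$-disjoint subfamilies $\mathcal{U}_1^X,\dots,\mathcal{U}_k^X\subset\mathcal{Y}$ whose union covers $X$.

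The heart of the argument is to produce the first $k$ links of the desired sFDC chain for $\mathcal{X}$ by peeling off one APC layer at each scale. Setting $B_0^X=X$ and $B_i^X=B_{i-1}^X\setminus\bigcup\mathcal{U}_i^X$, so that $B_k^X=\emptyset$, I would define
\[
\mathcal{X}_i=\{B_i^X:X\in\mathcal{X}\}\cup\{U\cap B_{j-1}^X: X\in\mathcal{X},\,1\le j\le i,\,U\in\mathcal{U}_j^X\}.
\]
Each $B_{i-1}^X$ then decomposes as the $R_i$-disjoint union $\bigcup\{U\cap B_{i-1}^X:U\in\mathcal{U}_i^X\}$ together with the single remainder $B_i^X$, with every piece now sitting in $\mathcal{X}_i$; the elements of $\mathcal{X}_{i-1}$ inherited from earlier layers already lie in $\mathcal{X}_i$ and can be carried forward through the trivial one-element $R_i$-disjoint decomposition of themselves. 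This shows $\mathcal{X}_{i-1}$ is $R_i$-decomposable over $\mathcal{X}_i$ for $i=1,\dots,k$.

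Because $B_k^X=\emptyset$, the family $\mathcal{X}_k$ consists solely of subsets of members of $\mathcal{Y}$, so $\mathcal{X}_k\prec\mathcal{Y}$, and Lemma~\ref{lem:basic facts}(3) places $\mathcal{X}_k$ in $\C{\beta}$. The inductive hypothesis applied to $\mathcal{X}_k$ with the tail sequence $(R_{k+1},R_{k+2},\dots)$ then extends the chain by finitely many further links to a bounded family, and concatenation yields the required sFDC witness for the original sequence $R$. The main obstacle I anticipate is essentially bookkeeping: checking that the ``carry-forward'' of pre-existing pieces at each peeling step is a legitimate $R_i$-decomposition (harmless, since a singleton family is vacuously $R_i$-disjoint), and that $\mathcal{X}_k$ really does sit below $\mathcal{Y}$ cleanly enough to feed into the inductive hypothesis. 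Beyond that, the proof is driven entirely by the ordinal recursion built into the definition of $\C{\alpha}$.
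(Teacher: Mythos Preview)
Your proof is correct and follows essentially the same peeling argument as the paper's appendix proof: your layer family $\mathcal{X}_i$ coincides with the paper's (your $U\cap B_{j-1}^X$ is exactly the paper's $U\setminus\bigcup\bigcup_{l<j}\U_l^X$, and your $B_i^X$ is the paper's residual set). The only cosmetic difference is the handling of the tail: you invoke Lemma~\ref{lem:basic facts}(3) to place $\mathcal{X}_k$ in $\C{\beta}$ and apply the inductive hypothesis directly to $\mathcal{X}_k$, whereas the paper applies it to $\mathcal{Y}$ and then intersects the resulting sFDC chain with $\mathcal{X}_k$.
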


 We note that Theorem~\ref{thm:propA} follows from this theorem and \cite[Theorem 4.2]{DrZ17}. 
 
 The proof is similar to \cite[Proposition 3.2]{DrZ14}.
 
\begin{proof}
It suffices to show the following property $(\ast_\alpha)$ holds for every ordinal $\alpha$ by transfinite induction.

$(*_\alpha)$ $\mathcal{X}$ has straight finite decomposition complexity for every $\mathcal{X}\in\C{\alpha}$.

Property $(*_0)$ holds obviously. 

Suppose that $\alpha>0$ and $(*_\beta)$ holds for every $\beta<\alpha$. Let $\mathcal{X}\in\C{\alpha}$ and $R=(R_i)_{i\in\mathbb{N}}\in\mathbb{R}^{\mathbb{N}}$. Take $\beta<\alpha$ and $\mathcal{Y}_0\in\C{\beta}$ so that $\mathcal{X}$ is uniformly $R$-decomposable over $\mathcal{Y}_0$. Let $k$ be the number from the uniform $R$-decomposition. For every $X\in\mathcal{X}$, take $\U_{1}^X,\ldots,\U_{k}^X\subset \mathcal{Y}_0$ such that each $\U_{i}^X$ is $R_i$-disjoint and $\bigcup_{i=1}^k\U_{i}^X$ covers $X$. Set $\mathcal{X}_0^X=\{X\}$ and 
\[\mathcal{X}_i^X=\bigcup_{j=1}^i\left\{ U\setminus\bigcup\bigcup_{k=1}^{j-1}\U_{k}^X\colon U\in\U_{j}^X\right\}\cup\left\{X\setminus \bigcup\bigcup_{j=1}^i\U_{j}^X\right\}\]
for $i\in\{1,\ldots,k\}$. Let $\mathcal{X}_i=\bigcup_{X\in \mathcal{X}}\mathcal{X}_i^X$ for $i\in\{0,1,\ldots, k\}$. Then $\mathcal{X}_{i-1}$ is $R_i$-decomposable over $\mathcal{X}_i$ for every $i\in\{1,\ldots,k\}$, $\mathcal{X}_0=\mathcal{X}$, and $\mathcal{X}_k\prec\mathcal{Y}_0$. 

By the induction assumption, $\mathcal{Y}_0$ has straight finite decomposition complexity. Thus there exist $m\in\mathbb{N}$ and families $\mathcal{Y}_1,\ldots,\mathcal{Y}_m$ of metric spaces such that $\mathcal{Y}_{i-1}$ is $R_{k+i}$-decomposable over $\mathcal{Y}_i$ for every $i\in\{1,\ldots,m\}$. Let $\mathcal{X}_{k+i} = \{X \cap Y \colon X \in \mathcal{X}_k,Y \in\mathcal{Y}_i\}$ for $i\in\{1,\ldots,m\}$. Then $\mathcal{X}_{k+i-1}$ is $R_{k+1}$-decomposable over $\mathcal{X}_{k+i}$ and $\mathcal{X}_{k+m}$ is uniformly bounded. Therefore $\mathcal{X}$ has straight finite decomposition complexity.
\end{proof}

\end{document}